\documentclass[10pt]{amsart}

\pagestyle{empty}
\usepackage{hyperref}
\usepackage{color}
\usepackage{graphicx}
\usepackage{latexsym}
\usepackage{enumerate}
\usepackage{cite}
\usepackage{tikz}
\usetikzlibrary{decorations.pathmorphing}
\usetikzlibrary{decorations.markings}
\usetikzlibrary{shapes,positioning}
\usetikzlibrary{patterns}
\usepackage[text={4.5in,7.2in},centering]{geometry}

\theoremstyle{definition}
\newtheorem{theorem}{Theorem}[section]
\newtheorem{corollary}[theorem]
{Corollary}
\newtheorem{lemma}[theorem]{Lemma}

\newtheorem{proposition}[theorem]{Proposition}

\newtheorem{definition}[theorem]{Definition}

\newtheorem{observation}[theorem]{Observation}

\numberwithin{figure}{section}

\title{Cycle domination, independence, and irredundance in Graphs}

\author{Amy Grady, Fiona Knoll, Renu Laskar, Drew J. Lipman \\ Clemson University}

\begin{document}

\begin{abstract}  A set $S$ of vertices in a graph $G = (V, E)$ is called {\em cycle independent} if the induced subgraph
$\langle S\rangle$ is acyclic, and called {\em odd-cycle indepdendet} if $\langle S\rangle$ is bipartite.
A set $S$ is {\em cycle dominating} (resp. {\em odd-cycle dominating}) if for every vertex $u \in V \setminus S$ there exists a vertex $v \in S$ such that $u$ and $v$ are contained in a (resp. odd cycle) cycle in $\langle S \setminus \{u\}\rangle$.
A set $S$ is {\em cycle irredundant} (resp. odd-cycle irredundant) if for every vertex $v \in S$ there exists a vertex $u \in V \setminus S$ such that $u$ and $v$
are in a (resp. odd cycle) cycle of $\langle S \setminus \{u\}\rangle$, but $u$ is not in a cycle of $\langle S \cup \{u\} \setminus \{v\}\rangle$.
In this paper we present these new concepts, which relate in a natural way to independence, domination and irredundance in graphs.
In particular, we construct analogs to the domination inequality chain for these new concepts.
\end{abstract}

\maketitle
\thispagestyle{empty}

\section{Introduction}

In 1978, Cockayne et al. \cite{CockayneEtAl} first defined what has now become a well-known inequality chain of domination related parameters of a graph  as:
\[ir(G)\leq \gamma(G)\leq i(G)\leq \beta(g)\leq \Gamma(G)\leq IR(G),\]
where $ir$ and $IR$ are the lower and upper irredundance numbers respectively, $\gamma$ and $\Gamma$ are the domination and upper domination numbers respectively, and $i$ and $\beta$ are the independent domination number and the independence number respectively.
Since then, other parameters have been added to the chain.
Two of the key concepts of this inequality chain are the concepts of hereditary properties and ancestral properties.

\begin{definition}
We say a property $P$ is {\em hereditary} if, for all sets $S$ that satisfy $P$, every set $S'\subseteq S$ also satisfies $P$.
\end{definition}

\begin{definition}
We say a property $P$ is {\em ancestral} if, for all sets $S$ that satisfy $P$, every set $S'\supseteq S$ also satisfies $P$.
\end{definition}

For example, the property that characterizes a set of vertices being independent is a hereditary property, as every subset of an independent set is independent.
Similarly, the property that characterizes a set of vertices being a dominating set is an ancestral property, as every superset of a dominating set is dominating.

For our purposes, the key behavior of hereditary and ancestral properties is how they affect finding maximal sets and minimal sets.
\begin{definition}
Let $P$ be a property and $S$ a set that holds for $P$,
\begin{itemize}
\item we say $S$ is {\em 1-minimal} if there does not exist a set $S'\subset S$ that holds for $P$ such that $|S\setminus S'|=1$.
\item we say $S$ is {\em 1-maximal} if there does not exist a set $S'\supset S$ that holds for $P$ such that $|S'\setminus S|=1$.
\end{itemize}
\end{definition}

\begin{observation} Let $P$ be a property and $S$ a set that satisfies $P$.
\begin{itemize}
\item If $P$ is hereditary, then $S$ is maximal if and only if $S$ is 1-maximal.
\item If $P$ is ancestral, then $S$ is minimal if and only if $S$ is 1-minimal.
\end{itemize}
\end{observation}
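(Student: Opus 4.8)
The plan is to prove each biconditional by establishing the easy implication directly and the other by contraposition, in each case invoking the defining clause of heredity (resp.\ ancestrality). I will give the hereditary statement in full; the ancestral one is its exact dual, obtained by reversing every inclusion and replacing ``adjoin a vertex'' with ``delete a vertex''.

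For the hereditary case, the implication ``$S$ maximal $\Rightarrow$ $S$ 1-maximal'' requires no hypothesis: any witness $S'\supset S$ with $|S'\setminus S|=1$ is in particular a proper superset of $S$ satisfying $P$, and maximality forbids such an $S'$. For the converse I would argue by contraposition. Suppose $S$ is not maximal, so there exists $S'$ with $S'\supset S$ and $S'$ satisfying $P$. Since the inclusion is proper, choose a vertex $x\in S'\setminus S$ and set $S''=S\cup\{x\}$. Then $S\subset S''\subseteq S'$, so $S''$ is a subset of a set satisfying $P$; as $P$ is hereditary, $S''$ satisfies $P$. Since $|S''\setminus S|=1$, the set $S''$ witnesses that $S$ is not 1-maximal, which is exactly the contrapositive of what we want.

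The ancestral case is symmetric. ``$S$ minimal $\Rightarrow$ $S$ 1-minimal'' is immediate. For the converse, if $S$ is not minimal pick $S'\subset S$ satisfying $P$, choose $x\in S\setminus S'$, and put $S''=S\setminus\{x\}$; then $S'\subseteq S''\subset S$, so ancestrality applied to $S'$ yields that $S''$ satisfies $P$, and $|S\setminus S''|=1$ contradicts 1-minimality of $S$.

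I do not expect a genuine obstacle; the only point requiring care is that the vertex $x$ extracted from the symmetric difference exists precisely because the relevant inclusion is \emph{proper}, and that the auxiliary set $S''$ must be sandwiched between $S$ and the witness $S'$ so that heredity (resp.\ ancestrality) is applied in the correct direction.
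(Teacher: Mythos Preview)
Your proof is correct. The paper states this result as an Observation without supplying a proof, offering only the one-sentence gloss that for hereditary (resp.\ ancestral) properties it suffices to check that no single element can be added (resp.\ removed); your argument is precisely the standard justification of that gloss and there is nothing to add.
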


That is, to prove a set is maximal for a hereditary property, it suffices to show that there is no single element that can be added to the set while maintaining the property.
Similarly, to show a set is minimal for an ancestral property, it suffices to show that there is no single element that can be removed from the set while maintaining the property.

In this paper we construct analogs to the domination chain above by considering hereditary and ancestral properties that are analogs to independence and domination, as well as study some bounds for the resulting properties.
We consider the properties, on sets of vertices, that characterize acyclic, and bipartite graphs.

\section{Cycle Independence and Odd-Cycle Independence}

The first property we look at is the acyclic property.
Observe that this is, in fact, a hereditary property as every induced subgraph of a forest is a forest.

\begin{definition}
Let $G=(V,E)$ and $S\subseteq V$. Then the set $S$ is {\em cycle independent} if $\langle S\rangle$, the subgraph induced by $S$, is acyclic.
\end{definition}

This parameter is closely connected to several well studied parameters.
In particular, we consider the relationships between the decycling number $\nabla(G)$ first introduced in \cite{BeinekeVandell}, the size of the maximum induced tree $t(G)$ introduced in \cite{ErdosSaksSos} and cycle independent sets.

\begin{definition}
A set $S\subseteq V(G)$ is a {\em decycling set} if $G\setminus S$ is cycle-free, that is, $G\setminus S$ is a forest.
The minimum order of a decycling set is called the {\em decycling number} of $G$ and is denoted $\nabla(G)$.
\end{definition}

Observe that a set $S\subseteq V$ is cycle independent if and only if $S=V\setminus S'$ for a decycling set $S'$.

\begin{definition}
Let $t(G)$ denote the maximum size of a subset of vertices of a graph $G$ that induces a tree.
\end{definition}

We are interested in defining analogs to $\beta(G)$ the independence number and $i(G)$ the independent domination number in this new paradigm, as well as some bounds associated with them.

\begin{definition}
Let $G=(V,E)$ be a graph; then
\begin{itemize}
\item $\beta_{cy}(G):=\max\{|S|\ :\ S \mbox{ is cycle independent}\}$ is the {\em cycle independence number} and,
\item $i_{cy}(G):=\min\{|S|\ :\ S\mbox{ is maximally cycle independent}\}$ is the {\em lower cycle independence number}.
\end{itemize}
\end{definition}

Let $G=(V,E)$. Observe that as $\beta_{cy}(G)$ is the maximum size of an induced forest of $G$, $|V|=\beta_{cy}(G)+\nabla(G)$.
Similarly, for any graph $G$, $t(G)\leq \beta_{cy}(G)$.
The other parameter, $i_{cy}$, does not seem to be related to $t(G)$.
In particular, Figure \ref{fig:DoubleStar} has $i_{cy}(G)<t(G)=\beta_{cy}(G)$, while other graphs, e.g., $G$ a set of $n$ isolates, have $t(G)<i_{cy}(G)$.

Now we provide some basic bounds on $i_{cy}$ and $\beta_{cy}$.

\begin{figure}
\begin{center}
\begin{tikzpicture}[every loop/.style={}]
\draw (0,0) circle (.3) node (A) {$u_1$}
(4,0) circle (.3) node (B) {$u_2$}
(2,1.5) circle (.3) node (C) {$v_1$}
(2,3.1) circle (.3) node (D) {$v_n$};
\node at (2,2.4) {$\vdots$};
\draw[line width=.5pt] (A) edge (B);
\draw[line width=.5pt] (A) edge (C);
\draw[line width=.5pt] (A) edge (D);
\draw[line width=.5pt] (B) edge (C);
\draw[line width=.5pt] (B) edge (D);
\end{tikzpicture}
\end{center}
\caption{The figure above characterizes a family of graphs produced by $n$ vertices each only adjacent to $u_1$ and $u_2$, and the edge $u_1u_2$.
This family of graphs is an example where the bounds in Proposition \ref{prop:EasyBounds}, Proposition \ref{prop:CyDomBound} and Proposition \ref{prop:CyIRBound} are tight.
In fact, up to an arbitrary subdivision of the edge $u_1u_2$ and vertices not contained in a cycle, this family characterizes when the lower bounds of these propositions are tight. \label{fig:DoubleStar}}
\end{figure}
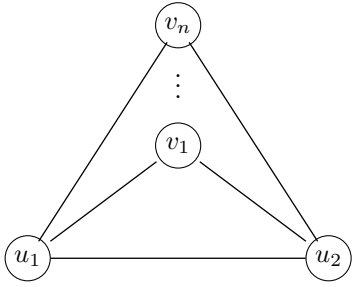

\begin{proposition}\label{prop:EasyBounds}
For a graph $G=(V,E)$, let $\kappa(G)$ denote the number of vertices of $G$ not contained in any cycle of $G$ and $\tau(G)$ be the maximum number of vertex disjoint cycles of $G$. Then
\[\mbox{girth}(G)-1+\kappa(G)\leq i_{cy}(G)\leq \beta_{cy}(G)\leq |V|-\tau(G),\]
where we set $\mbox{girth}(G)=1$ if and only if $G$ is a tree, and to be the girth of $G$ otherwise.
\end{proposition}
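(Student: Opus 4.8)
The plan is to prove the three inequalities separately. The middle inequality $i_{cy}(G)\le\beta_{cy}(G)$ is essentially a definitional observation: any cycle independent set of maximum size $\beta_{cy}(G)$ is in particular \emph{maximally} cycle independent, since there is no larger cycle independent set to extend it into, so the minimum size over all maximally cycle independent sets is at most $\beta_{cy}(G)$.

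For the right-hand inequality I would use the identity $|V|=\beta_{cy}(G)+\nabla(G)$ noted above, which reduces the claim to $\tau(G)\le\nabla(G)$. This is immediate: given pairwise vertex-disjoint cycles $C_1,\dots,C_{\tau(G)}$ of $G$, every decycling set must delete at least one vertex from each $C_j$, and by disjointness these are distinct vertices, so every decycling set has size at least $\tau(G)$; taking a minimum gives $\nabla(G)\ge\tau(G)$.

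The left-hand inequality is the heart of the argument. If $G$ is a tree then $V$ is the unique maximally cycle independent set, so $i_{cy}(G)=|V|=\kappa(G)$, which equals $\mathrm{girth}(G)-1+\kappa(G)$ by the stated convention $\mathrm{girth}(G)=1$; so assume $G$ is not a tree and set $g=\mathrm{girth}(G)\ge 3$. The key lemma I would establish is: if $S$ is any maximally cycle independent set and $v\in V\setminus S$, then there is a cycle $C'$ of $G$ with $v\in V(C')$ and $V(C')\setminus\{v\}\subseteq S$. Indeed, by maximality $\langle S\cup\{v\}\rangle$ contains some cycle $C'$; since $\langle S\rangle$ is acyclic, $C'$ must pass through $v$, and removing $v$ from $C'$ leaves a path all of whose vertices lie in $S$. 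Two consequences follow. First, every vertex of $G$ lying on no cycle of $G$ belongs to $S$ (else such a $v\notin S$ would by the lemma lie on the cycle $C'$), so all $\kappa(G)$ such vertices are in $S$. Second, choosing any $v\in V\setminus S$ — possible since $V$ is not cycle independent — the cycle $C'$ from the lemma contributes $|V(C')|-1\ge g-1$ vertices of $S$, all lying on the cycle $C'$ of $G$ and therefore disjoint from the $\kappa(G)$ vertices of the first consequence. Hence $|S|\ge\kappa(G)+g-1$ for every maximally cycle independent set $S$, and in particular $i_{cy}(G)\ge\mathrm{girth}(G)-1+\kappa(G)$.

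The main obstacle is this left-hand inequality: proving the key lemma carefully — the crucial point being that a cycle newly created by adding a single vertex must pass through that vertex, leaving a path of length at least $g-1$ inside $S$ — and then combining the count of $\kappa(G)$ vertices off every cycle with the $g-1$ vertices of $C'$ without double counting, which works precisely because the two groups are, respectively, on no cycle and on the cycle $C'$. One must also not forget to treat the tree case separately, where the girth convention makes the bound an equality.
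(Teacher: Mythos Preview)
Your proposal is correct and follows essentially the same approach as the paper: the middle inequality is definitional, the right-hand inequality reduces to $\nabla(G)\ge\tau(G)$, and the left-hand inequality combines the observation that every vertex off all cycles lies in any maximal cycle independent set with a girth-based count of at least $g-1$ additional vertices on a cycle. Your key lemma (that adding any $v\notin S$ creates a cycle through $v$, yielding $g-1$ on-cycle vertices of $S$) makes explicit and rigorous the step the paper states more tersely as ``every maximal cycle independent set $S$ contains at least $g-1$ vertices that are contained on some cycle of $G$.''
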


\begin{proof}
By definition $i_{cy}(G)\leq \beta_{cy}(G)$ holds for all graphs, it thus suffices to show $\mbox{girth}(G)-1+\kappa(G)\leq i_{cy}(G)$ and $\beta_{cy}(G)\leq |V|-\tau(G)$.

We begin by showing $\mbox{girth}(G)-1+\kappa(G)\leq i_{cy}(G)$.
Suppose $G$ is not a tree, and the smallest cycle of $G$ has size $\mbox{girth}(G)=g$.
Let $S$ be all the vertices of $G$ that are not contained in any cycle of $G$ and $g-1$ additional vertices.
Since the additional $g-1$ vertices cannot form a cycle, $\langle S\rangle$ is acyclic.
Note that this does not depend on the choice of the $g-1$ vertices.
On the other hand, let $v$ be a vertex of $G$ that is not contained in any cycle of $G$.
Every maximal cycle independent set $S$ of $G$ contains $v$.
Moreover, every maximal cycle independent set $S$ contains at least $g-1$ vertices that are contained on some cycle of $G$.
Hence, $|S|\geq \kappa(G)+g-1$.
If $G=(V,E)$ is a tree, then $i_{cy}(G)=|V|$, $\mbox{girth}(G)-1=0$, and $\kappa(G)=|V|$, which gives $i_{cy}(G)= \mbox{girth}(G)-1+\kappa(G)$.

To show $\beta_{cy}(G)\leq |V|-\tau(G)$, it suffices to observe that the decycling number, $\nabla(G)$, is at least $\tau(G)$, since each vertex disjoint cycle has at least one vertex not in the cycle independent set.
\end{proof}

As simple as this bound is, there are cases where the bounds are tight.
For example, Figure \ref{fig:DoubleStar} gives a family of graphs where both bounds are tight.

\begin{corollary}\label{cor:IndepBound}
Let $G$ be a graph where every vertex is contained on a cycle and $\mbox{girth}(G)=g$.
Then, $i_{cy}(G)=g-1$ if and only if $G$ contains a path of length $g-1$ such that every vertex not on the path is adjacent to the leaves and to no other vertices of the path.
\end{corollary}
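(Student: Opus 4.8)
The plan is to reduce everything to the structure of a \emph{smallest} maximal cycle independent set. Since every vertex of $G$ lies on a cycle we have $\kappa(G)=0$, so Proposition~\ref{prop:EasyBounds} gives $i_{cy}(G)\ge g-1$ (recall $G$ is not a tree, so $\mbox{girth}(G)=g$). Hence $i_{cy}(G)=g-1$ if and only if $G$ has a maximal cycle independent set $S$ with $|S|=g-1$, and both implications amount to analyzing the edges inside such an $S$ and between $S$ and $V\setminus S$, with the girth hypothesis doing the work by forbidding cycles shorter than $g$.

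For the direction ($\Leftarrow$), suppose $G$ contains a path $P=p_1p_2\cdots p_{g-1}$ (on $g-1$ vertices) so that every vertex off $P$ is adjacent to both ends $p_1,p_{g-1}$ and to no other $p_i$. First I would observe that $\langle V(P)\rangle$ is acyclic: a chord $p_ip_j$ with $j-i\ge 2$ would close a cycle of length $j-i+1\le g-1<g$, contradicting $\mbox{girth}(G)=g$, so $\langle V(P)\rangle$ is exactly the path $P$. (At least one vertex lies off $P$, since otherwise $G=P$ would be acyclic, contradicting that $G$ has a cycle.) Then $V(P)$ is maximal: for any $v\notin V(P)$ the edges $vp_1,vp_{g-1}$ turn $p_1\cdots p_{g-1}$ into a $g$-cycle of $\langle V(P)\cup\{v\}\rangle$, so nothing can be added. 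Thus $V(P)$ is a maximal cycle independent set of order $g-1$ and $i_{cy}(G)=g-1$.

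For ($\Rightarrow$), fix a maximal cycle independent set $S$ with $|S|=g-1$; then $\langle S\rangle$ is a forest and $V\setminus S\neq\emptyset$ (else $\langle S\rangle=G$ would be acyclic). Pick any $v\notin S$. By maximality $\langle S\cup\{v\}\rangle$ has a cycle, and since $\langle S\rangle$ is acyclic that cycle runs through $v$; deleting $v$ leaves a path $Q$ in $\langle S\rangle$ between two neighbors of $v$, and $Q$ together with $v$ is a cycle of length $|V(Q)|+1\ge g$, so $|V(Q)|\ge g-1$. As $Q\subseteq S$ and $|S|=g-1$, $Q$ is a Hamiltonian path of $\langle S\rangle$; since $\langle S\rangle$ is a forest on $g-1$ vertices already carrying the $(g-2)$-edge path $Q$, it has no further edges, so $\langle S\rangle=Q$. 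Hence $\langle S\rangle$ is a single path $P=p_1\cdots p_{g-1}$ with a unique pair of ends $p_1,p_{g-1}$. Running the same argument for an arbitrary $u\notin S$ forces the Hamiltonian path it produces to be $P$ itself, so $u$ is adjacent to $p_1$ and $p_{g-1}$; and if $u$ were also adjacent to an interior $p_i$, then $p_1\cdots p_iu$ would be a cycle of length $i+1\le g-1<g$, which is impossible. Thus $S=V(P)$ has exactly the asserted structure.

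The crux — the one step I would write out most carefully — is the middle of the converse: promoting ``$\langle S\rangle$ is a forest'' to ``$\langle S\rangle$ is a single path whose two ends are joined to every outside vertex.'' The delicate points are that the $S$-path extracted from one external vertex already spans $S$ (forcing $\langle S\rangle$ to be that path, hence to have a \emph{unique} pair of ends, so the argument then applies verbatim to \emph{every} external vertex), together with the repeated appeal to $\mbox{girth}(G)=g$ to exclude chords of $P$ and edges from external vertices to interior vertices of $P$. The remaining verifications are routine.
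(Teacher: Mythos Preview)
Your proof is correct and follows essentially the same approach as the paper's. Both directions hinge on the same idea: a maximal cycle independent set $S$ of size $g-1$ forces $\langle S\cup\{v\}\rangle$ (for any $v\notin S$) to be exactly a $C_g$, which pins down $\langle S\rangle$ as a path with $v$ attached only at the two ends. The paper compresses this into the single assertion $\langle S\cup\{v\}\rangle\cong C_g$, whereas you unpack it step by step (extracting a Hamiltonian path of $\langle S\rangle$, ruling out chords, ruling out interior adjacencies), but the content is the same and the girth hypothesis is doing identical work in both arguments.
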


\begin{proof}
Let $G=(V,E)$ be as above, and $S$ be a cycle independent set of $G$ such that $|S|=i_{cy}(G)=g-1$.
By definition of cycle independence, $\langle S\rangle$ is a forest.
Since the girth of $G$ is $g$, for any vertex $v\notin S$, $\langle S\cup\{v\}\rangle$ contains a cycle of length $g$.
That is, $\langle S\cup\{v\}\rangle\cong C_g$, hence $\langle S\rangle$ is a path.
Since this holds for all vertices $v\notin S$, and $\langle S\rangle$ is independent of the choice of $v$, every vertex $v\notin S$ is adjacent to the leaves of $\langle S\rangle$ and no other vertex in $S$.

Assume $G$ is as above and contains a path $P$ of length $g-1$ such that every vertex not on the path is adjacent to the leaves and no other vertices of the path.
Set $S$ to be the vertices of the path, and observe that $S$ is a maximal cycle independent set since for every $v\notin S$, $\langle S\cup\{v\} \rangle$ contains a cycle.
Thus, by Proposition \ref{prop:EasyBounds}, $|S|=g-1\geq i_{cy}(G)\geq g-1$.
\end{proof}

This corollary gives a characterization of the graphs where $i_{cy}(G)=\mbox{girth}(G) - 1+\kappa(G)$.
In general, graphs with $i_{cy}(G)=\mbox{girth}(G)-1+\kappa(G)$ have, at most, one non-tree component.
The non-tree components are obtained by subdividing the edge $u_1u_2$ of the graph in Figure \ref{fig:DoubleStar}, for some $n$, and then rooting a tree at each vertex.

Moreover, these graphs also satisfy $\beta_{cy}(G)=|V|-\tau(G)=|V|-1$ as the deletion of any vertex on the subdivision of $u_1u_2$ gives a forest.

We now consider a more specific case of cycle independence, namely odd-cycle independence.

\begin{definition} A set of vertices $S$ in a graph $G = (V,E)$ is called {\em odd-cycle independent} if the induced subgraph $\langle S \rangle$ contains no odd cycles.
\end{definition}

A set $S$ is odd-cycle independent, i.e., free of odd cycles, if and only if it is bipartite. We again are interested in the analogs of the independence numbers.

\begin{definition} Let $G = (V,E)$ be a graph. Then
	\begin{itemize}
		\item $\beta_{odd} (G) = \max\{|S|: \text{S is odd-cycle independent}\}$ is the {\em odd-cycle independence number}, i.e., the maximum possible number of vertices in an odd-cycle independent set.
		\item $i_{odd} (G) = \min\{|S|: \text{S is maximal odd-cycle independent}\}$ is the {\em lower odd-cycle independence number}, i.e., the minimum possible number of vertices in a maximal odd-cycle independent set.
	\end{itemize}
\end{definition}

For example, in Figure \ref{fig:DoubleStar}, there are three maximal independent sets:\\ $\{u_1,v_1, \ldots, v_n\}$, $\{u_2,v_1, \ldots, v_n\}$ and $\{u_1,u_2\}$. Adding one more vertex to any of these sets would produce an induced graph with an odd cycle, giving us $\beta_{odd}(G) = n+1$ and $i_{odd} = 2$. By definition, we have $i_{odd}(G) \leq \beta_{odd}(G)$, but in this particular case, we see that $i_{odd}(G) < \beta_{odd}(G)$. We now generalize this observation. 

\begin{proposition} \label{prop:IndepIneq} Let $G$ be a connected graphical embedding such that every bounded face is a $C_3$ and the chromatic number $\chi(G) =3$. Then if $|V(G)|=n$ is not a multiple of 3, 
	\[i_{odd}(G) < \beta_{odd}(G).\]
\end{proposition}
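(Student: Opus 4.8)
The plan is to use a proper $3$-colouring of $G$ to produce several maximal odd-cycle independent sets simultaneously, and then to exploit $3\nmid n$ to force two of them to have different sizes.

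First I would fix a proper $3$-colouring of $G$ (it exists since $\chi(G)=3$) with colour classes $V_1,V_2,V_3$ of sizes $n_1,n_2,n_3$. As $n_1+n_2+n_3=n$ is not divisible by $3$, the sizes are not all equal, so $\max_k n_k>\min_k n_k$. For a pair $\{i,j\}\subseteq\{1,2,3\}$ put $S_{ij}=V_i\cup V_j$. Since $V_i$ and $V_j$ are independent, $\langle S_{ij}\rangle$ is bipartite, so $S_{ij}$ is odd-cycle independent; in particular $\beta_{odd}(G)\ge |S_{ij}|=n-n_k$ whenever $\{i,j,k\}=\{1,2,3\}$.

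The heart of the argument is to show each $S_{ij}$ is \emph{maximal} odd-cycle independent. Take $v\in V_k=V\setminus S_{ij}$. Using the embedding hypothesis, $v$ lies on a bounded face, which is a triangle $\{v,x,y\}$ with $xy\in E(G)$. Since the colouring is proper and $v,x,y$ are pairwise adjacent, $x$ and $y$ carry the two colours other than $k$, so $x,y\in V_i\cup V_j=S_{ij}$; hence $\langle S_{ij}\cup\{v\}\rangle$ contains the triangle $\{v,x,y\}$ and is not bipartite. As this holds for every $v\notin S_{ij}$, the set $S_{ij}$ is maximal, and therefore $i_{odd}(G)\le |S_{ij}|=n-n_k$ for every $k$.

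To conclude, take $k$ attaining $\max_k n_k$ to get $i_{odd}(G)\le n-\max_k n_k$, and $k$ attaining $\min_k n_k$ to get $\beta_{odd}(G)\ge n-\min_k n_k$; since $\max_k n_k>\min_k n_k$, this gives $i_{odd}(G)<\beta_{odd}(G)$. I expect the real obstacle to be the maximality step, specifically the claim that every vertex of $G$ lies on a (triangular) bounded face: this is where the structure of the embedding does its work, and a little care is needed here, since a connected plane graph all of whose bounded faces are triangles can have a vertex lying on no cycle (e.g.\ a triangle with a pendant edge), which would break both the argument and the conclusion, so the intended reading presumably takes $G$ to be $2$-connected. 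The rest is routine bookkeeping with the colour classes; indeed the same construction can be sharpened to $i_{odd}(G)=n-\max_k n_k$ and $\beta_{odd}(G)=n-\min_k n_k$.
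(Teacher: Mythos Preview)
Your argument is correct and follows essentially the same route as the paper: pick a proper $3$-colouring, form the three ``two-colour'' unions, check they are bipartite and (using the triangular faces) maximal, and then use $3\nmid n$ to force a strict gap between the smallest and largest of them. The paper's version is slightly less explicit---it singles out only the pair $S_1\cup S_2$ and $S_2\cup S_3$ after ordering $|S_1|\ge|S_2|\ge|S_3|$---but the mechanism is identical, and your caveat that the maximality step silently assumes every vertex lies on a bounded (triangular) face is a fair observation about an implicit hypothesis the paper also relies on.
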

\begin{proof}
It suffices to show $i_{odd}(G) \neq \beta_{odd}(G)$. Let $S_1, S_2, S_3$ be set of vertices such that each consists of a color class of $G$, i.e., $S_j \cap S_k = \emptyset$ for $1 \leq j\neq k \leq 3$ and $S_1 \cup S_2 \cup S_3 = V(G)$. Without loss of generality, $|S_1| \geq |S_2| \geq |S_3|$. Let $S_\beta$ be the set of vertices in the two larger color classes, that is $S_\beta = S_1 \cup S_2$. Let $S_i$ be the set of vertices in the two smaller color classes, that is $S_i = S_2 \cup S_3$. 

We claim $S_\beta$ and $S_i$ are maximal odd-cycle independent sets. To prove this claim, consider $v_1 \in  S_3 = V \setminus S_\beta$ and $v_2 \in S_1 = V \setminus S_i$. Since $G$ is a connected graph composed only of triangular faces. $v_1$ and $v_2$ are contained in a $C_3$. Hence, $S_\beta \cup \{v_1\}$ and $S_i \cup \{v_2\}$ is no longer 2-colorable and contains an odd cycle, specifically $C_3$. Thus, $S_\beta$ and $S_i$ are maximal odd-cycle independent sets.

It follows that $i_{odd}(G) \leq |S_i|$ and $\beta_{odd}(G) \geq |S_\beta|$. Since $3$ does not divide $n$, we have our desired result:
	\[i_{odd}(G) \leq |S_i| < |S_\beta| \leq \beta_{odd}(G).\] 
\end{proof}

Now we consider lower and upper bounds for both $i_{odd}(G)$ and $\beta_{odd}(G)$.

\begin{proposition} \label{prop:IndepEasyBound1}
Let $G$ be a graph and $|V(G)|=n$. Let $\kappa_{odd}(G)$ denote the number of vertices of $G$ not contained in any odd cycle of $G$, and let $\tau_{odd}(G)$ be the maximum number of vertex disjoint odd cycles of $G$. Then if G is not bipartite,
	\[\kappa_{odd}(G)+\mbox{girth}(G) -1 \leq i_{odd}(G) \leq n -\tau_{odd}(G);\]
otherwise, $i_{odd}(G) = n$.
\end{proposition}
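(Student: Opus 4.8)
The plan is to mirror the proof of Proposition~\ref{prop:EasyBounds}, handling three assertions in turn: the bipartite case, the upper bound on $i_{odd}(G)$, and the lower bound. For the bipartite case, if $G$ has no odd cycle then $V(G)$ itself is odd-cycle independent, hence the unique maximal such set, so $i_{odd}(G)=n$. For the rest, I would assume $G$ is not bipartite and let $g$ denote the length of a shortest odd cycle of $G$; since every odd cycle is in particular a cycle, $g\ge\mbox{girth}(G)$, so it suffices to prove the (formally sharper) bound $\kappa_{odd}(G)+g-1\le i_{odd}(G)$.

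For the upper bound I would fix pairwise vertex-disjoint odd cycles $C_1,\dots,C_{\tau_{odd}(G)}$. Any odd-cycle independent set $S$ must omit at least one vertex of each $C_j$, since otherwise $\langle S\rangle$ would contain the odd cycle $C_j$; as the $C_j$ are vertex-disjoint this forces $|V\setminus S|\ge\tau_{odd}(G)$, i.e. $|S|\le n-\tau_{odd}(G)$. In particular every maximal odd-cycle independent set has order at most $n-\tau_{odd}(G)$, giving $i_{odd}(G)\le n-\tau_{odd}(G)$ (and incidentally $\beta_{odd}(G)\le n-\tau_{odd}(G)$).

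For the lower bound, let $S$ be any maximal odd-cycle independent set. First, every vertex $v$ lying on no odd cycle of $G$ must belong to $S$: adjoining $v$ creates no odd cycle in $\langle S\cup\{v\}\rangle$, because such a cycle would be an odd cycle of $G$ through $v$, so maximality forces $v\in S$; hence $S$ contains all $\kappa_{odd}(G)$ of these vertices. Next, since $G$ is not bipartite we have $S\ne V$, so choose $v\in V\setminus S$; by maximality $\langle S\cup\{v\}\rangle$ contains an odd cycle $C$, which must pass through $v$ because $\langle S\rangle$ is bipartite, and $C$ is an odd cycle of $G$, so $|V(C)|\ge g$. The at least $g-1$ vertices of $C$ other than $v$ all lie in $S$ and all lie on the odd cycle $C$, so they are disjoint from the $\kappa_{odd}(G)$ vertices counted above; therefore $|S|\ge\kappa_{odd}(G)+g-1$. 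Taking the minimum over all maximal $S$ completes the proof.

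The argument is short and essentially parallels Proposition~\ref{prop:EasyBounds}; the only points requiring care — the ``main obstacle'' such as it is — are ensuring the $g-1$ vertices produced on $C$ are genuinely distinct from the $\kappa_{odd}(G)$ vertices (handled by noting they lie on the explicit odd cycle $C\subseteq G$) and confirming that $V\setminus S\ne\emptyset$ so that such a $v$ exists, which is exactly where the hypothesis that $G$ is not bipartite is used.
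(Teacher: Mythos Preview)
Your proof is correct and follows essentially the same approach as the paper: show the bipartite case directly, observe that every maximal odd-cycle independent set must contain all $\kappa_{odd}(G)$ vertices, and use maximality at a vertex $v\notin S$ to produce an odd cycle contributing at least $g-1$ further vertices to $S$, while the upper bound comes from the disjoint odd cycles each forcing an omission. Your version is in fact a bit more careful than the paper's---you make the choice of $v\in V\setminus S$ and the resulting odd cycle $C$ explicit, you verify the $g-1$ vertices on $C$ are disjoint from the $\kappa_{odd}(G)$ vertices, and you observe that one may take $g$ to be the odd girth rather than the girth, yielding a formally sharper lower bound---but the underlying argument is the same.
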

\begin{proof}
Suppose $G$ is not bipartite, and let $\mbox{girth}(G)=g$. We first show $\kappa_{odd}(G) + g -1 \leq i_{odd}(G)$. Suppose $S$ is a maximal odd-cycle independent set. Then $S$ must contain the set of vertices not contained in any odd cycle of $G$; otherwise, it would not be maximal. Now, suppose $S$ has less than $g-1$ additional vertices. Then, this would contradict the set's maximality since the smallest odd cycle has length of at least $g$. Hence, for any maximal odd-cycle independent set $S$,
	\[|S| \geq \kappa_{odd}(G) +g-1.\]
We now show the inequality $i_{odd}(G) \leq n- \tau_{odd}(G)$. For each disjoint odd cycle, there is at least one vertex not in the odd-cycle independent set. Hence,
	\[i_{odd}(G) \leq n-\tau_{odd}(G).\]

Suppose $G$ is bipartite. Then clearly, $G$ is odd-cycle independent giving us $i_{odd}(G)=n$.

To show the bounds are tight, consider a 3-cycle graph $G$. Then $i_{odd}(G) = 2 = \kappa_{odd}(G) + \mbox{girth}(G) -1 = n - \tau_{odd}(G)$.
\end{proof}

Recall, a graph is bipartite if and only if it is 2-colorable. Hence, we provide the following bound in terms of the chromatic number for the odd-cycle independence number, $\beta_{odd}$.

\begin{proposition}  \label{prop:IndepEasyBound2}
Let $G$ be a graph with $|V(G)|=n$ and $\chi(G)=k$. Then 
	\[ 2 \left\lfloor \frac{n}{k} \right\rfloor \leq \beta_{odd}(G) \leq n - \tau_{odd}(G).\]
\end{proposition}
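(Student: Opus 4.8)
The plan is to establish the two inequalities separately. The upper bound $\beta_{odd}(G) \le n - \tau_{odd}(G)$ requires no new argument: it is exactly the upper bound already proved in Proposition~\ref{prop:IndepEasyBound1}, since any odd-cycle independent set must omit at least one vertex from each of a maximum collection of vertex-disjoint odd cycles, and I would simply cite that proposition. So the entire content is the lower bound $2\lfloor n/k\rfloor \le \beta_{odd}(G)$.

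For the lower bound, the key idea is that a proper $k$-coloring partitions $V(G)$ into $k$ independent sets (color classes) $C_1,\dots,C_k$, and the union of any two independent sets induces a bipartite subgraph: an odd cycle alternating between two color classes would have odd length yet need an even number of vertices, a contradiction. Hence any union $C_j \cup C_\ell$ of two color classes is odd-cycle independent, so $\beta_{odd}(G) \ge |C_j| + |C_\ell|$ for every pair $j \ne \ell$. To make this as large as possible, I would order the classes so that $|C_1| \ge |C_2| \ge \cdots \ge |C_k|$ and take the two largest, giving $\beta_{odd}(G) \ge |C_1| + |C_2|$.

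The remaining step is the pigeonhole estimate: since $\sum_{i=1}^k |C_i| = n$ and the $|C_i|$ are sorted in nonincreasing order, the two largest classes together contain at least a $2/k$ fraction of the vertices, i.e. $|C_1| + |C_2| \ge \lceil 2n/k \rceil \ge 2\lfloor n/k\rfloor$. One clean way to see $|C_1|+|C_2| \ge 2\lfloor n/k\rfloor$: write $n = qk + r$ with $0 \le r < k$; if $|C_1| + |C_2| \le 2\lfloor n/k \rfloor - 1 = 2q - 1$, then $|C_2| \le q - 1$ (since $|C_1| \ge |C_2|$ forces $|C_1|+|C_2| \ge 2|C_2|$, so $2|C_2| \le 2q-1$ gives $|C_2| \le q-1$ as $|C_2|$ is an integer), hence every class $C_i$ with $i \ge 2$ has size at most $q-1$, and $n = \sum |C_i| \le |C_1| + (k-1)(q-1) \le (2q-1) - |C_2| + (k-1)(q-1)$; combined with $|C_1| \le 2q-1$ this yields $n \le (2q-1) + (k-2)(q-1) = qk - k + q + 1 \le qk + r$ only for small $r$, and a short case check closes it. I expect this final arithmetic to be the only fiddly part; it is routine pigeonhole and I would present just the essential line $|C_1| + |C_2| \ge 2\lceil n/k \rceil \ge 2\lfloor n/k\rfloor$ rather than the full case analysis. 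To note tightness, I would point out that for $G = K_k$ (or a disjoint union of copies of $K_k$ when $k \mid n$) all color classes have equal size $n/k$ and $\beta_{odd} = 2n/k$, matching the lower bound; and $K_3$ again shows both bounds can coincide.
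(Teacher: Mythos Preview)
Your approach is essentially identical to the paper's: take the two largest color classes of a proper $k$-coloring, observe their union is bipartite (hence odd-cycle independent), and use the averaging/pigeonhole fact that the two largest of $k$ nonnegative integers summing to $n$ total at least $2n/k \ge 2\lfloor n/k\rfloor$; the paper asserts this last inequality without the detailed justification you sketch, and uses $C_3$ for tightness just as you suggest $K_3$. One small correction: Proposition~\ref{prop:IndepEasyBound1} bounds $i_{odd}$, not $\beta_{odd}$, so you cannot simply cite it for the upper bound (since $i_{odd}\le\beta_{odd}$, the implication goes the wrong way); however, the one-line argument you give inline---each odd-cycle independent set omits a vertex from every cycle in a maximum packing---is exactly what the paper does, so the proof itself is fine.
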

\begin{proof}
To show the lower bound, consider the color classes $S_1, \ldots, S_k$. Then we have the average number of vertices in each color class is $\frac{n}{k}$. Without loss of generality, $|S_1| \geq |S_2| \geq \dots \geq |S_k|$. Consider the set $S = S_1 \cup S_2$. The set $S$ is odd-cycle independent since it is 2-colorable, i.e., bipartite. Also, $|S| \geq 2 \left\lfloor  \frac{n}{k} \right\rfloor$. It follows that 
	\[\beta_{odd}(G) \geq 2 \left\lfloor  \frac{n}{k} \right\rfloor.\]
To show the upper bound, note that for each disjoint odd cycle, there is at least one vertex not represented in all of the odd-cycle independent sets. Thus,
	\[\beta_{odd}(G)  \leq n-  \tau_{odd}(G).\]
To show the bounds are tight, let $G$ be a 3-cycle. Then $\beta_{odd}(G) = 2 = 2 \left\lfloor\frac{n}{k} \right\rfloor = n - \tau_{odd}(G)$.
\end{proof}

In the case of a maximal outer planar graph $G$, a tighter upper bound can be obtained.

\begin{proposition} \label{prop:IndepSpecificBound}
If $G$ is maximal outer planar, then $\beta_{odd}(G) = |S_1| + |S_2|$ where $S_1$ and $S_2$ are the two largest color classes possible.
\end{proposition}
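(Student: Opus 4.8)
The plan is to exploit the (essentially unique) proper $3$-colouring of a maximal outerplanar graph: it immediately yields a large odd-cycle independent set for the lower bound, and it controls all odd-cycle independent sets from above.

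First I would recall that a maximal outerplanar graph $G$ on $n\ge 3$ vertices has a proper $3$-colouring, unique up to renaming colours; this follows by induction on $n$ on deleting a degree-$2$ vertex, whose colour is forced by its two (adjacent) neighbours. Write $C_1,C_2,C_3$ for the colour classes with $|C_1|\ge |C_2|\ge |C_3|$, so that $S_1=C_1$, $S_2=C_2$, $S_3=C_3$ in the statement. For $\beta_{odd}(G)\ge |S_1|+|S_2|$ it suffices that $\langle C_1\cup C_2\rangle$ be bipartite, and I would in fact show it is a tree. Since $G$ is outerplanar, no vertex of $G$ lies strictly inside any cycle $C$ of $G$, so every triangular face lying inside $C$ has all of its vertices on $C$; as each triangular face uses all three colours, every cycle of $G$ meets all three colour classes, so $\langle C_i\cup C_j\rangle$ is acyclic for each pair $i\neq j$. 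Counting the $2n-3$ edges of $G$ by the colour pair they join gives $e_{12}+e_{13}+e_{23}=2n-3$ with $e_{ij}\le |C_i|+|C_j|-1$, and $|C_1|+|C_2|+|C_3|=n$ forces equality throughout; hence each $\langle C_i\cup C_j\rangle$ is a spanning tree of $C_i\cup C_j$, and in particular $S_1\cup S_2$ is odd-cycle independent.

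For the upper bound I would first reduce ``bipartite'' to ``triangle-free'' for induced subgraphs. A maximal outerplanar graph is chordal: a cycle of length $\ge 4$ bounds a region that is internally triangulated by chords of that cycle (no vertex lies strictly inside it), so it has a chord; and chordality passes to induced subgraphs. Since a triangle-free chordal graph is a forest, $\langle S\rangle$ is bipartite if and only if it is triangle-free. Moreover every triangle of $G$ is a face, because the region it bounds contains no vertex and hence no chord. Consequently $S$ is odd-cycle independent exactly when $V\setminus S$ meets every triangular face of $G$, so $\beta_{odd}(G)=n-\rho(G)$, where $\rho(G)$ is the least number of vertices meeting every triangular face. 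As each face uses all three colours, $S_3$ is such a transversal, so $\rho(G)\le|S_3|$; combining this with the first part, the theorem follows once $\rho(G)\ge|S_3|$ is proved.

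Establishing $\rho(G)\ge|S_3|$ — equivalently, that every odd-cycle independent $S$ satisfies $|S\cap S_3|\le|(S_1\cup S_2)\setminus S|$ — is the crux, and the route I would take is to build an injection $S\cap S_3\hookrightarrow (S_1\cup S_2)\setminus S$. For $u\in S\cap S_3$ the neighbours of $u$ form a path $P_u$ inside the tree $\langle S_1\cup S_2\rangle$, and the faces at $u$ are precisely the triangles on consecutive vertices of $P_u$; since no face lies within $S$, the vertices of $P_u$ that belong to $S$ form an independent set of $P_u$, so those outside $S$ form a nonempty vertex cover of $P_u$. To make the choices injective I would apply Hall's theorem to the bipartite incidence graph between $S\cap S_3$ and $(S_1\cup S_2)\setminus S$, using the structural fact that each edge of the tree $\langle S_1\cup S_2\rangle$ lies on at most two of the paths $P_u$ (such an edge lies in at most two faces of $G$, and it is on $P_u$ only if it is an edge of a face at $u$), which bounds the overlap of the sets $N(u)\setminus S$. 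Verifying Hall's condition $\bigl|\bigcup_{u\in U}\bigl(N(u)\setminus S\bigr)\bigr|\ge|U|$ for every $U\subseteq S\cap S_3$ is the delicate point — this is where the outerplanarity of $G$ (and not merely its chordality) must be brought to bear — and I expect it to be the main obstacle.
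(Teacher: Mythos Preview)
Your lower-bound argument is far more elaborate than necessary. The paper dispatches it in one line: $S_1\cup S_2$ is $2$-colourable, hence bipartite, hence odd-cycle independent, so $\beta_{odd}(G)\ge|S_1|+|S_2|$. Proving that $\langle S_1\cup S_2\rangle$ is actually a tree, while true and pleasant, is not needed for this direction.

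For the upper bound you take a genuinely different route from the paper. The paper argues directly: since every bounded face is a triangle, an odd-cycle independent $S$ contains at most two vertices of each face, and from this it \emph{asserts} that $S$ must lie inside the union of two colour classes, whence $|S|\le|S_1|+|S_2|$. You instead reduce to showing that every face-transversal has size at least $|S_3|$, and propose to exhibit an injection $S\cap S_3\hookrightarrow (S_1\cup S_2)\setminus S$ via Hall's theorem. Your caution here is well placed: the paper's inference is in fact not justified as written --- in the hexagon $v_1\cdots v_6$ with chords $v_1v_3$, $v_3v_5$, $v_5v_1$, the set $\{v_1,v_2,v_4,v_6\}$ induces a forest yet meets all three colour classes --- so the upper bound really does need more than the paper supplies.

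That said, your own argument carries an explicit gap that you yourself flag: Hall's condition is never verified, and the observation that each edge of $\langle S_1\cup S_2\rangle$ lies on at most two of the paths $P_u$ does not by itself yield $\bigl|\bigcup_{u\in U}(N(u)\setminus S)\bigr|\ge|U|$. Until that step is completed --- or replaced by an alternative, such as induction along the tree of faces by peeling off a degree-$2$ vertex and its ear triangle --- the upper bound, and hence the proposition, remains unproved.
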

\begin{proof} Since $G$ is maximal outer planar, $\chi(G)=3$ and every bounded face is a triangle. Let $S_1, S_2, S_3$ be the three color classes. Without loss of generality, $|S_1| \geq |S_2| \geq |S_3|$. 

We first show $\beta_{odd}(G) \geq |S_1|+|S_2|$. Let $S = S_1 \cup S_2$. Since $\langle S\rangle$ is 2-colorable, it is bipartite and hence, odd-cycle independent. Thus, $\beta_{odd}(G) \geq |S_1| + |S_2|$. 

To show $\beta_{odd}(G) \leq |S_1|+|S_2|$, consider an arbitrary odd-cycle independent set $S$. Since every bounded face in $G$ is $C_3$, $S$ may contain no more than two vertices from each face. In other words, $S$ may contain no more than two colors for each face. So, a set $S$ may contain only a subset of two of the three color classes. Hence,
	\[|S| \leq |S_1|+|S_2|.\]
\end{proof}

Finding the parameters of a maximal odd-cycle independent set is closely related to solving the bipartization problem. The bipartization problem consists of minimizing the number of vertices required to be in a set $W$, such that the induced graph $\langle G \setminus W \rangle$ is bipartite. In 1978, the bipartization problem was proven to be NP hard by Yannakakis in \cite{Yannakakis}, but since then there has been success in finding upper and lower bounds.

\begin{definition}
Let $G =(V,E)$. 
	\begin{itemize}
		\item A set $W \subseteq V(G)$ is called an {\em odd-cycle cover} if the induced graph $\langle G \setminus W \rangle$ is bipartite. In addition, we define $\tau = \min\{|W|: G \setminus W \text{ is bipartite}\}$. 
		\item Let $T$ be a collection of vertex disjoint odd cycles. Then $T$ is called a {\em packing} of $G$, and we define $\nu = \max\{|T|: T \text{ is a packing set} \}$.
	\end{itemize}
\end{definition}

In \cite{KralVoss}, it has been shown that for a graph $G$, $ \tau  \geq \nu$, and for a plane graph, $\tau \leq 2 \nu$. 

In terms of the odd-cycle independence number, $\beta_{odd}$, we have $\beta_{odd}(G)=|V(G)|-\tau$. Hence, these proven constraints provide us with additional constraints:
	\begin{itemize}
		\item For a general graph $G$, $\beta_{odd}(G) \leq |V(G)|-\nu$, and
		\item For a plane graph $G$, $\beta_{odd}(G) \geq |V(G)|-2\nu$.
	\end{itemize}

\section{Cycle Domination and Odd-Cycle Domination}
In this section we introduce cycle domination and odd-cycle domination.
The relationship between these and cycle independence and odd-cycle independence is analogous to the relationship between independence and domination.
We also relate these new graph parameters to known existing parameters.

\begin{definition}
Let $G=(V,E)$ and $S\subseteq V$. Then $S$ is a {\em cycle dominating} set if for all $u\notin  S$, there is a cycle $C\subseteq \langle S\cup\{u\}\rangle$ containing $u$.
\end{definition}

Observe that, like domination, cycle domination is an ancestral property.

Double domination was first introduced in \cite{HararyHanes}, and can be used to give a bound on $\gamma_{cy}(G)$.

\begin{definition}
Let $G=(V,E)$ and $S\subseteq V$. Then $S$ is a {\em $k$-tuple dominating} set if for all $u\notin S$, there are $k$ elements of $S$ that are adjacent to $u$.
Denote the size of the smallest $k$-tuple dominating set as $\gamma_k(G)$.
\end{definition}

Observe that every cycle dominating set is in fact a 2-tuple dominating set.
That is, for every vertex not in a cycle dominating set, the vertex is adjacent to at least the two vertices in the set that give the cycle.

\begin{definition}
Let $G=(V,E)$ be a graph then:
\begin{itemize}
\item $\gamma_{cy}(G):=\min\{|S|\ :\ S \mbox{ is cycle dominating}\}$ is the {\em cycle domination number} and,
\item $\Gamma_{cy}(G):=\max\{|S|\ :\ S\mbox{ is minimal cycle dominating}\}$ is the {\em upper cycle domination number}.
\end{itemize}
\end{definition}

This displays a relation between the 2-tuple domination number of a graph $G$ and cycle domination number of $G$.

\begin{proposition}
Let $G$ be a graph then, $\gamma_{2}(G)\leq \gamma_{cy}(G)$.
\end{proposition}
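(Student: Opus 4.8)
The plan is to establish the stronger statement already flagged in the text just above the proposition: every cycle dominating set is a $2$-tuple dominating set. Granting this, the inequality is immediate. Indeed, take $S\subseteq V$ to be a cycle dominating set realizing the minimum, so $|S|=\gamma_{cy}(G)$. Since $S$ is also $2$-tuple dominating, $\gamma_2(G)\le|S|=\gamma_{cy}(G)$, which is exactly the claim.

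To prove the stronger statement, fix an arbitrary cycle dominating set $S\subseteq V$ and an arbitrary vertex $u\in V\setminus S$. By the definition of cycle domination there is a cycle $C\subseteq\langle S\cup\{u\}\rangle$ with $u\in V(C)$. Every vertex of a cycle has exactly two neighbors along that cycle, so $u$ has two distinct neighbors $v_1,v_2$ on $C$. Since $V(C)\subseteq S\cup\{u\}$ and $v_1,v_2\neq u$, we conclude $v_1,v_2\in S$. Hence $u$ is adjacent to at least two vertices of $S$; as $u\in V\setminus S$ was arbitrary, $S$ is a $2$-tuple dominating set by definition.

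I expect no real obstacle here; the argument is a direct consequence of the degree structure of a cycle. The only points requiring a moment of care are bookkeeping: that $\gamma_{cy}(G)$ and $\gamma_2(G)$ are well defined (the full vertex set $V$ satisfies both conditions vacuously, so each minimum is taken over a nonempty family), and that the two neighbors of $u$ on $C$ are genuinely two \emph{distinct} vertices of $S$, which uses $|V(C)|\ge 3$ together with $u\notin S$. No case analysis on the structure of $G$ is needed.
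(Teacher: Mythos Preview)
Your proof is correct and follows essentially the same approach as the paper: both arguments rest on the observation (already noted in the text preceding the proposition) that every cycle dominating set is a $2$-tuple dominating set, from which the inequality is immediate. You simply spell out the detail that the two cycle-neighbors of $u$ lie in $S$, which the paper leaves implicit.
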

\begin{proof}
It suffices to observe that any cycle dominating set contains a minimal 2-tuple dominating set.
This follows from the fact that any cycle dominating set is a 2-tuple dominating set, and hence contains a minimal 2-tuple dominating set.
\end{proof}

Another lower bound can be obtained for $\gamma_{cy}(G)$.

\begin{proposition}\label{prop:CyDomBound}
Let $G$ be a graph, and let $\kappa(G)$ be the number of vertices that are not contained in any cycles of $G$. Then
\[\mbox{girth}(G)-1+\kappa(G)\leq \gamma_{cy}(G),\]
where $\mbox{girth}(G)=1$ if and only if $G$ is a forest.
\end{proposition}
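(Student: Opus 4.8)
The plan is to follow essentially the strategy used for the lower bound in Proposition~\ref{prop:EasyBounds}: fix an arbitrary cycle dominating set $S$, and show $|S|\geq \mbox{girth}(G)-1+\kappa(G)$ by producing that many distinct vertices forced into $S$, separated according to whether or not they lie on a cycle of $G$.

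First I would dispose of the degenerate case. If $G$ is a forest, then $\langle S\cup\{u\}\rangle$ contains no cycle for any vertex $u$, so a cycle dominating set can have no vertex outside it; hence the only cycle dominating set is $V$, and $\gamma_{cy}(G)=|V|$. Since in this case $\mbox{girth}(G)=1$ and $\kappa(G)=|V|$, the claimed inequality holds with equality.

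Now assume $G$ is not a forest and set $g=\mbox{girth}(G)\geq 3$. Let $S$ be any cycle dominating set. The key observation is that every vertex $v$ of $G$ lying on no cycle of $G$ must belong to $S$: if $v\notin S$, cycle domination would produce a cycle through $v$ inside $\langle S\cup\{v\}\rangle\subseteq G$, contradicting the choice of $v$. This accounts for $\kappa(G)$ vertices of $S$. It remains to find $g-1$ further vertices of $S$, each lying on some cycle of $G$ (so that they are automatically distinct from the $\kappa(G)$ already counted). If every vertex of $G$ lying on a cycle is already in $S$, then $S$ contains all $g$ vertices of a shortest cycle, so $|S|\geq \kappa(G)+g > \kappa(G)+g-1$ and we are done. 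Otherwise, pick a vertex $u\notin S$ that lies on a cycle of $G$; by cycle domination there is a cycle $C\subseteq\langle S\cup\{u\}\rangle$ through $u$, and $|V(C)|\geq g$ by definition of girth. Every vertex of $C$ other than $u$ lies in $S$ and lies on the cycle $C$ of $G$, yielding at least $g-1$ vertices of $S$ disjoint from the $\kappa(G)$ counted before. In both subcases $|S|\geq \kappa(G)+g-1$, and minimizing over all cycle dominating sets $S$ gives $\gamma_{cy}(G)\geq \mbox{girth}(G)-1+\kappa(G)$.

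The argument is almost entirely bookkeeping; the one point requiring care — and the place where a careless proof would slip — is ensuring that the two groups of vertices (those on no cycle of $G$, and the $g-1$ coming from the dominating cycle $C$) are genuinely disjoint. This is exactly why the case split on whether all cycle-vertices already lie in $S$ is needed, rather than simply grabbing some cycle $C$ from the cycle domination condition without checking that a suitable $u\notin S$ exists.
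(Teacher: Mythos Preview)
Your proof is correct and follows essentially the same strategy as the paper: both arguments force the $\kappa(G)$ non-cycle vertices into $S$ and then extract at least $g-1$ additional cycle-vertices from a cycle of length $\geq g$ in $\langle S\cup\{u\}\rangle$ (or in $\langle S\rangle$). The paper case-splits on whether $\langle S\rangle$ itself contains a cycle while you case-split on whether some cycle-vertex lies outside $S$, but the core idea and level of difficulty are the same; if anything, your explicit check that the two groups of vertices are disjoint is more careful than the paper's terse ``we may assume $\kappa(G)=0$''.
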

\begin{proof}
Observe that every vertex that is not contained in a cycle of $G$ is in every maximal dominating set.
Thus, we may assume that every vertex is contained in such a cycle, that is, $\kappa(G)=0$ and $\mbox{girth}(G)=g\geq 3$.
Now, suppose $S$ is a minimal cycle dominating set.
If $\langle S\rangle$ has a cycle, then $|S|\geq g$. Else if $\langle S\rangle$ is a forest, then for every vertex $v\notin S$, $\langle S\cup\{v\}\rangle$ has a cycle. Hence $|S\cup\{v\}|\geq g$ and thus, $|S| \geq g-1$.
\end{proof}

As before, we can characterize when this lower bound is tight.

\begin{corollary}
Let $G$ be a graph so that every vertex is contained on a cycle, and let $\mbox{girth}(G)=g$.
Then, $\gamma_{cy}(G)=g-1$ if and only if $G$ contains a path of length $g-1$ such that every vertex not on the path is adjacent to the two leaves and no other vertex of the path.
\end{corollary}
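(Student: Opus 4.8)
The plan is to imitate the proof of Corollary \ref{cor:IndepBound} almost verbatim, replacing the role of Proposition \ref{prop:EasyBounds} by Proposition \ref{prop:CyDomBound}. Write $g=\mbox{girth}(G)$; since by hypothesis every vertex of $G$ lies on a cycle we have $\kappa(G)=0$, and $G$ is not a forest, so $g\geq 3$ and Proposition \ref{prop:CyDomBound} already gives $\gamma_{cy}(G)\geq g-1$. It therefore only remains to tie this lower bound to the structural condition.

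For the easy ("if") direction I would argue as follows. Suppose $G$ contains a path $P$ on $g-1$ vertices whose two leaves are each adjacent to every vertex not on $P$, with no other edges from $V\setminus V(P)$ to $P$. Set $S=V(P)$. For any $u\notin S$, the path $P$ together with the two edges joining $u$ to the leaves of $P$ forms a cycle through $u$ inside $\langle S\cup\{u\}\rangle$, so $S$ is cycle dominating and $\gamma_{cy}(G)\leq|S|=g-1$. Combined with Proposition \ref{prop:CyDomBound} this gives $\gamma_{cy}(G)=g-1$.

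For the ("only if") direction, let $S$ be a cycle dominating set with $|S|=\gamma_{cy}(G)=g-1$ (minimality is not needed). Any cycle of $\langle S\rangle\subseteq G$ would have length at least $g>|S|$, so $\langle S\rangle$ is a forest. Since $G$ has a cycle, $S\neq V$, so choose $u\notin S$; cycle domination gives a cycle $C\ni u$ in $\langle S\cup\{u\}\rangle$, and since $|C|\geq g=|S\cup\{u\}|$, $C$ must be a spanning cycle of $\langle S\cup\{u\}\rangle$. Hence $C-u$ is a Hamiltonian path of the forest $\langle S\rangle$; as a forest on $g-1$ vertices has at most $g-2$ edges and this path already has $g-2$ edges, $\langle S\rangle$ \emph{equals} that path. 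Call it $P$; note $P=\langle S\rangle$ does not depend on $u$, so the same $P$ arises for every $u\notin S$, and in each case the two neighbours of $u$ along $C$ are exactly the two leaves of $P$. Finally, $u$ cannot be adjacent to an interior vertex $w$ of $P$: the edge $uw$ together with the portion of $C$ from $u$ to $w$ through one leaf would be a cycle of length strictly less than $g$, contradicting the girth. Thus every vertex off $P$ is adjacent to the two leaves of $P$ and to no other vertex of $P$.

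The one place requiring care is the length count in the forward direction: one must notice that the Hamiltonian path extracted from the spanning cycle $C$ exactly saturates the edge budget of a forest on $g-1$ vertices, which is what forces $\langle S\rangle$ to coincide with a single path rather than merely to contain one; everything else is routine girth bookkeeping to eliminate chords from $u$ to $P$.
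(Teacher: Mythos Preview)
Your proof is correct and follows essentially the same strategy as the paper. The only difference is packaging: the paper's ``only if'' direction simply observes that a cycle dominating set $S$ with $|S|=g-1$ is cycle independent (by the girth bound) and then invokes Corollary~\ref{cor:IndepBound} to conclude the structural description, whereas you reprove the content of Corollary~\ref{cor:IndepBound} inline --- in fact slightly more carefully, since your edge-count argument (``the Hamiltonian path saturates the $g-2$ edge budget of a forest on $g-1$ vertices'') and your explicit chord elimination make precise two steps the paper compresses into ``$\langle S\cup\{v\}\rangle\cong C_g$, hence $\langle S\rangle$ is a path.''
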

\begin{proof}
Assume $G$ is a graph such that every vertex is contained on a cycle.
Let $S$ be a cycle dominating set such that $|S|=g-1$.
By the definition of girth, $S$ is a cycle independent set.
By Corollary \ref{cor:IndepBound}, we conclude that $G$ contains an induced path such that every vertex not on the path is adjacent to the leaves and no other vertex.

Now, assume $G$ has a path of length $g-1$ such that every vertex not on the path is adjacent to the two leaves and no other vertex of the path.
Let $S$ be the vertex set of this path.
Observe that for every $u\notin S$, $\langle S\cup\{u\}\rangle$ is a cycle containing $u$.
Thus, $S$ is a cycle dominating set, further, $S$ is minimal as the deletion of any vertex of $S$ yields a set that is not cycle dominating.
By the bound in Proposition \ref{prop:CyDomBound}, we know that $|S|=\gamma_{cy}(G)$.
\end{proof}

As with cycle independence, this gives us a characterization for when the bound in Proposition \ref{prop:CyDomBound} is tight.
In fact, the condition for the lower bound in Proposition \ref{prop:EasyBounds} to be tight is the same as the condition for the lower bound in Proposition \ref{prop:CyDomBound}.

Finally, we build the next step of the domination inequality chain for cycle domination.

\begin{lemma}\label{lem:CyIneqChain2}
For any graph $G$,
\[\gamma_{cy}(G)\leq i_{cy}(G)\leq \beta_{cy}(G)\leq \Gamma_{cy}(G).\]
\end{lemma}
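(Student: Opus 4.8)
The plan is to isolate the one structural fact that drives the entire chain: \emph{every maximal cycle independent set of $G$ is a minimal cycle dominating set of $G$.} Granting this, all three inequalities are immediate. For $\gamma_{cy}(G)\le i_{cy}(G)$, take a maximal cycle independent set $S^\dagger$ with $|S^\dagger|=i_{cy}(G)$; it is in particular cycle dominating, so $\gamma_{cy}(G)\le|S^\dagger|=i_{cy}(G)$. The middle inequality $i_{cy}(G)\le\beta_{cy}(G)$ is already recorded in Proposition~\ref{prop:EasyBounds}, and also follows since a maximum cycle independent set has size $\beta_{cy}(G)$ and is maximal, hence is counted in the minimum defining $i_{cy}$. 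For $\beta_{cy}(G)\le\Gamma_{cy}(G)$, take a maximum cycle independent set $S^{*}$; it is maximal cycle independent, hence, by the structural fact, a minimal cycle dominating set, so $\Gamma_{cy}(G)\ge|S^{*}|=\beta_{cy}(G)$.

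To prove the structural fact, let $S$ be a maximal cycle independent set. First I would check that $S$ is cycle dominating. Since acyclicity is hereditary, maximality coincides with $1$-maximality (the observation following the definition of $1$-maximal), so for every $u\notin S$ the set $S\cup\{u\}$ is not cycle independent, i.e.\ $\langle S\cup\{u\}\rangle$ contains a cycle. Because $\langle S\rangle$ is acyclic, any such cycle must pass through $u$; hence $u$ lies on a cycle of $\langle S\cup\{u\}\rangle$, which is exactly the cycle-domination condition for $u$.

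Next I would check that $S$ is a minimal cycle dominating set. Since cycle domination is ancestral, minimality coincides with $1$-minimality, so it suffices to show that no single vertex can be removed from $S$ while keeping it cycle dominating. Suppose, for contradiction, that $S\setminus\{v\}$ is cycle dominating for some $v\in S$. Applying the definition of cycle domination to the vertex $u=v$ (which lies outside $S\setminus\{v\}$) yields a cycle in $\langle(S\setminus\{v\})\cup\{v\}\rangle=\langle S\rangle$ through $v$, contradicting the acyclicity of $\langle S\rangle$. This establishes the structural fact and hence the lemma.

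This is the cycle analog of the classical argument that every maximal independent set is a minimal dominating set, so I do not anticipate a serious obstacle. The only point that genuinely needs care is the observation that, since $\langle S\rangle$ is acyclic, every cycle produced by adjoining a vertex $u$ is forced to use $u$; this is precisely what converts ``maximality of the cycle independent set'' into ``$u$ is cycle dominated by $S$,'' and it is what makes the hereditary-to-ancestral passage work.
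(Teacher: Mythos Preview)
Your proof is correct and follows the same approach as the paper: both reduce the chain to the structural fact that every maximal cycle independent set is a minimal cycle dominating set. If anything, your argument is slightly more complete, since you explicitly verify minimality (by showing $S\setminus\{v\}$ fails to cycle dominate $v$ itself), whereas the paper's proof only spells out the domination half.
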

\begin{proof}
It suffices to observe that any maximal cycle independent set is in fact a minimal cycle dominating set.
This is due to the fact that given a maximal cycle independent set $S$ of $G$, and a vertex $v\notin S$, there is a cycle containing $v$ in $\langle S\cup\{v\}\rangle$.
\end{proof}

Now we consider similar questions using odd-cycle domination instead of cycle domination.

\begin{definition}A set $S \subseteq V(G)$ is called {\em odd-cycle dominating} if for every vertex $v \in V \setminus S$ there exists $u \in S$, such that $\{v\} \cup \{u\} $ is contained in an odd cycle of the induced subgraph $\langle S \cup \{v\} \rangle$. \end{definition}

The odd-cycle domination number is defined as follows:

\begin{definition} Let $G = (V,E)$ be a graph. Then
	\begin{itemize}
		\item $\Gamma_{odd} (G) = \max\{|S|: \text{S is minimal odd-cycle dominating}\} $ is the {\em odd-cycle domination number}, i.e., the maximum possible number of vertices in a minimal odd-cycle dominating set.
		\item $\gamma_{odd} (G) = \min\{|S|: \text{S is minimal odd-cycle dominating}\}$ is the {\em lower odd-cycle domination number}, i.e., the minimum possible number of vertices in an odd-cycle dominating set.
	\end{itemize}
\end{definition}

For example, consider Figure \ref{fig:DoubleStar}. The sets $\{u_1,v_1, \ldots, v_n\}$, $\{u_2,v_1, \ldots, v_n\}$ and $\{u_1,u_2\}$ are minimal odd-cycle dominating. In addition, any set containing $\{u_1,u_2\}$ is an odd-cycle dominating set.

The minimal sets mentioned in the previous example were initially presented as maximal odd-cycle independent sets, but are now seen to be minimal dominating sets. This observation holds for every graph $G$ as shown in the ensuing lemma.

\begin{lemma} \label{Lemma:IndepDom}
For a graph $G=(V,E)$, if $S \subset V(G)$ is a maximal odd-cycle independent set, then $S$ is a minimal odd-cycle dominating set.
\end{lemma}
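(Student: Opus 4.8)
The plan is to prove this in two parts, mirroring the structure of the analogous statement for cycle independence: first that a maximal odd-cycle independent set is odd-cycle dominating, and second that it is minimal with respect to this property. Since odd-cycle domination is an ancestral property (every superset of an odd-cycle dominating set is odd-cycle dominating, because adding vertices only enlarges the induced subgraphs $\langle S \cup \{v\}\rangle$), by the Observation it suffices to check $1$-minimality for the second part.

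For the domination part, let $S$ be a maximal odd-cycle independent set and take any $v \in V \setminus S$. Because $S$ is maximal, $S \cup \{v\}$ is not odd-cycle independent, i.e.\ $\langle S \cup \{v\}\rangle$ contains an odd cycle $C$. Since $\langle S\rangle$ is bipartite (no odd cycles), $C$ must pass through $v$; let $u$ be any neighbor of $v$ on $C$, so $u \in S$ and $\{u,v\}$ lies on the odd cycle $C$ in $\langle S \cup \{v\}\rangle$. This is exactly the defining condition, so $S$ is odd-cycle dominating.

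For minimality, I would argue that no proper subset obtained by deleting a single vertex $w \in S$ is still odd-cycle dominating; equivalently, I must exhibit a vertex $v \in V \setminus (S \setminus \{w\})$ that is not odd-cycle dominated by $S \setminus \{w\}$. The natural candidate is $v = w$ itself: I need to show there is no $u \in S \setminus \{w\}$ such that $\{u,w\}$ lies on an odd cycle of $\langle (S \setminus \{w\}) \cup \{w\}\rangle = \langle S\rangle$. But $\langle S\rangle$ is bipartite, hence has \emph{no} odd cycle at all, so $w$ cannot lie on an odd cycle in $\langle S\rangle$, and the required $u$ does not exist. Therefore $S \setminus \{w\}$ fails to odd-cycle dominate $w$, so $S$ is $1$-minimal and hence minimal odd-cycle dominating.

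I expect the only subtle point to be bookkeeping the induced-subgraph bracket in the domination definition: the condition for $v$ is phrased in terms of $\langle S \cup \{v\}\rangle$, so when testing whether $S \setminus \{w\}$ dominates $w$ one looks at $\langle (S\setminus\{w\}) \cup \{w\}\rangle = \langle S\rangle$, and the bipartiteness of $\langle S\rangle$ kills it immediately — so in fact there is no real obstacle, the argument is clean once the definitions are unwound carefully. One should also note the harmless edge case where $S = V$ (i.e.\ $G$ itself is bipartite): then $V \setminus S = \emptyset$, so $S$ is vacuously odd-cycle dominating, and the minimality argument above still applies verbatim to any $w \in S$.
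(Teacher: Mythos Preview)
Your proof is correct and follows essentially the same approach as the paper's own proof: establish domination by using maximality of $S$ to force an odd cycle through any $v \notin S$, and establish minimality by noting that any $w \in S$ fails to be odd-cycle dominated by $S \setminus \{w\}$ because $\langle S\rangle$ itself is bipartite. Your version is simply more explicit---you spell out why the odd cycle must pass through $v$, invoke the ancestral property to reduce to $1$-minimality, and note the vacuous case $S = V$---whereas the paper compresses all of this into two sentences.
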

\begin{proof}
Suppose $S$ is a maximal odd-cycle independent set. Then for any $v\in V \setminus S$, the induced graph $\langle S \cup \{v\} \rangle$ contains an odd cycle. Thus, we have $S$ is an odd-cycle dominating set. But also note that since $S$ is a maximal odd-cycle independent set, for any $s\in S$, the induced graph $\langle S \setminus s \rangle$ does not dominate $s$. Thus, $S$ is a minimal odd-cycle dominating set.
\end{proof}

Note that the converse of Lemma \ref{Lemma:IndepDom} is not necessarily true, as can be seen by the following example: consider the graph G in Figure \ref{fig:SunGraph}; the set of vertices $S=\{2,4,6\}$ is a minimal odd-cycle dominating set, but is not independent.  

		\begin{figure}
		\begin{center}
		\begin{tikzpicture} [every loop/.style={}] 
		\node [draw,circle] (A) at (4,0) {1};
		\node [draw,circle] (B) at (2,0) {2};
		\node [draw,circle] (C) at (0,0) {3};
		\node [draw,circle] (D) at (1,1) {4};
		\node [draw,circle] (E) at (2,2) {5};
		\node [draw,circle] (F) at (3,1) {6};
		\draw[line width=1pt] (C) edge (D);
		\draw[line width=1pt] (D) edge (E);
		\draw[line width=1pt] (E) edge (F);
		\draw[line width=1pt] (A) edge (F);
		\draw[line width=1pt] (A) edge (B);
		\draw[line width=1pt] (B) edge (C);
		\draw[line width=1pt] (B) edge (D);
		\draw[line width=1pt] (B) edge (F);
		\draw[line width=1pt] (D) edge (F);
		\end{tikzpicture}
		\end{center}
		\caption{This figure gives an explicit example of minimal dominating sets, which are not maximal independent and an example of when $\gamma_{odd}(G) < i_{odd}(G)$. \label{fig:SunGraph}}
		\end{figure}
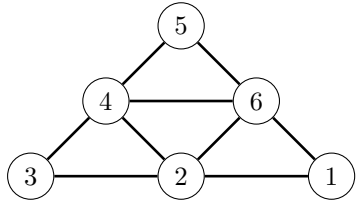

The preceding example gives an instance  when $\gamma_{odd}(G) <i_{odd}(G)$. This observation is generalized in the next proposition.

\begin{proposition} \label{prop:GammaIIneq}
For a graph $G$,
	\[\beta_{odd}(G) \leq \Gamma_{odd}(G) \quad \text{and}  \quad \gamma_{odd} (G) \leq  i_{odd} (G)\]
\end{proposition}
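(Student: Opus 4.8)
The plan is to prove the two inequalities separately, both by exhibiting the ``extremal'' set on one side as a set of the type being optimized on the other side. For $\beta_{odd}(G) \leq \Gamma_{odd}(G)$, let $S$ be a maximum odd-cycle independent set, so $|S| = \beta_{odd}(G)$. Being maximum, $S$ is in particular a maximal odd-cycle independent set, so by Lemma \ref{Lemma:IndepDom} the set $S$ is a minimal odd-cycle dominating set. Hence $|S|$ is the size of some minimal odd-cycle dominating set, and since $\Gamma_{odd}(G)$ is the maximum such size, $\beta_{odd}(G) = |S| \leq \Gamma_{odd}(G)$.

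For the inequality $\gamma_{odd}(G) \leq i_{odd}(G)$, I would go the other direction: let $S$ be a maximal odd-cycle independent set realizing $i_{odd}(G)$, i.e.\ of minimum size among maximal odd-cycle independent sets, so $|S| = i_{odd}(G)$. Again by Lemma \ref{Lemma:IndepDom}, $S$ is a minimal odd-cycle dominating set, so $|S|$ is the size of some minimal (hence in particular some) odd-cycle dominating set. Since $\gamma_{odd}(G)$ is the minimum size of an odd-cycle dominating set, we get $\gamma_{odd}(G) \leq |S| = i_{odd}(G)$. Both halves are thus immediate consequences of Lemma \ref{Lemma:IndepDom} together with the observation that a maximum (resp.\ smallest-maximal) odd-cycle independent set is in particular a maximal one.

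The only subtlety worth a sentence is making sure the quantifiers in the definitions line up: $\Gamma_{odd}$ maximizes over \emph{minimal} odd-cycle dominating sets, so for the first inequality it is essential that Lemma \ref{Lemma:IndepDom} delivers \emph{minimality} and not merely domination; conversely $\gamma_{odd}$ minimizes over \emph{all} odd-cycle dominating sets, so for the second inequality we only need that $S$ is dominating (minimality is a bonus we discard). I expect no real obstacle here — this is the exact analog of the elementary fact that $\beta \leq \Gamma$ and $\gamma \leq i$ in the classical domination chain, and the structural work has already been done in Lemma \ref{Lemma:IndepDom}. If one wanted to be thorough one could also note that such maximal odd-cycle independent sets exist (every odd-cycle independent set, e.g.\ $\emptyset$, extends to a maximal one, and the graph is finite), so both optima are over nonempty collections and are well defined.
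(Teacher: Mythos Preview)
Your proof is correct and follows exactly the same approach as the paper's own proof, which simply says ``It follows from Lemma~\ref{Lemma:IndepDom}.'' You have just unpacked the details of how each inequality is extracted from that lemma, which is fine.
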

\begin{proof}
It follows from Lemma  \ref{Lemma:IndepDom}.
\end{proof}

Now that we have a better understanding of odd-cycle dominating sets and how they relate to odd-cycle independent sets, we prove some simple bounds, which are shown to be tight.

\begin{proposition} \label{prop:GammaEasyIneq}
Let $G=(V,E)$ be a graph and $|V(G)|=n$. Let $\tau_{odd}(G)$ be the maximum number of vertex disjoint odd cycles of $G$. Then
	\[ 2 \left\lfloor\frac{n}{k} \right\rfloor \leq \Gamma_{odd}(G) \leq n - \tau_{odd}(G).\] 
\end{proposition}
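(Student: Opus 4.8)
The plan is to prove the two inequalities separately: the lower bound falls straight out of results already established, whereas the upper bound genuinely needs its own argument. For the lower bound $2\lfloor n/k\rfloor\le\Gamma_{odd}(G)$ — where $k=\chi(G)$, continuing the notation of Proposition \ref{prop:IndepEasyBound2} — I would simply chain Proposition \ref{prop:GammaIIneq}, which gives $\beta_{odd}(G)\le\Gamma_{odd}(G)$, with Proposition \ref{prop:IndepEasyBound2}, which gives $2\lfloor n/k\rfloor\le\beta_{odd}(G)$. Concretely, the union $S_1\cup S_2$ of the two largest color classes is bipartite, hence odd-cycle independent, of size at least $2\lfloor n/k\rfloor$; extending it to a maximal odd-cycle independent set and invoking Lemma \ref{Lemma:IndepDom} produces a minimal odd-cycle dominating set of size at least $2\lfloor n/k\rfloor$. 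Note that although the upper bound $\Gamma_{odd}(G)\le n-\tau_{odd}(G)$ reads the same as the upper bound for $\beta_{odd}(G)$, it does \emph{not} follow from it, since the relation $\Gamma_{odd}(G)\ge\beta_{odd}(G)$ points the wrong way.

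For the upper bound I would show that every minimal odd-cycle dominating set $S$ satisfies $|V\setminus S|\ge\tau_{odd}(G)$, which rearranges to $|S|\le n-\tau_{odd}(G)$. Fix a maximum packing $C_1,\dots,C_t$ of vertex-disjoint odd cycles, $t=\tau_{odd}(G)$, and suppose toward a contradiction that $|V\setminus S|<t$. Partition the $C_i$ into those meeting $V\setminus S$ and those lying entirely in $S$; by vertex-disjointness there are at most $|V\setminus S|<t$ of the former, so at least one $C_i$ lies entirely inside $S$. This is exactly where the one-line counting argument used for $\beta_{odd}(G)$ breaks: a minimal odd-cycle dominating set can contain odd cycles (for instance $\{2,4,6\}$ in Figure \ref{fig:SunGraph} is minimal odd-cycle dominating yet induces a triangle). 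So instead I would use minimality: for each $c\in V(C_i)$ the set $S\setminus\{c\}$ is not odd-cycle dominating, and since $c$ is still witnessed by the odd cycle $C_i\subseteq\langle S\rangle$, the failure must occur at an external vertex $w_c\in V\setminus S$, meaning every odd cycle through $w_c$ in $\langle S\cup\{w_c\}\rangle$ passes through $c$. The goal is then to extract from the assignment $c\mapsto w_c$ enough distinct vertices of $V\setminus S$ to reach the contradiction.

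The hard part will be controlling how many vertices of a single odd cycle $C_i\subseteq S$ can share one external witness $w$: a priori $w_c=w_{c'}$ for many $c,c'$. The tool I would reach for is a parity/rerouting argument: $w$ lies on some odd cycle $D$ of $\langle S\cup\{w\}\rangle$ (it is dominated), and if $c,c'\in V(C_i)$ both force every odd cycle through $w$ to pass through them, then $D$ contains both; since the two $c$–$c'$ arcs of $C_i$ have lengths summing to the odd number $|C_i|$, one of them is odd, and splicing that arc into $D$ should yield an odd closed walk through $w$ that can be reduced to an odd cycle through $w$ omitting one of $c,c'$, a contradiction. Making this precise is delicate because the spliced arc may meet $D$ internally; a cleaner implementation is probably to pass to the symmetric difference of the edge sets of $D$ and $C_i$, which is an even subgraph, decompose it into edge-disjoint cycles, locate the component containing $w$, and chase parities there. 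Once one shows each external vertex can ``legitimately'' witness at most one vertex per cycle inside $S$, the bound $|V\setminus S|\ge t$ follows. Finally, tightness is immediate from $G=C_3$: there $n=3$, $k=\chi(G)=3$, $\tau_{odd}(G)=1$, and $\Gamma_{odd}(C_3)=2=2\lfloor 3/3\rfloor=3-1$, so both inequalities are attained simultaneously.
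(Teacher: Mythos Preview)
Your lower bound and the tightness example match the paper exactly: the paper also just writes $\Gamma_{odd}(G)\ge\beta_{odd}(G)\ge 2\lfloor n/k\rfloor$ and checks $C_3$.

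For the upper bound, however, the paper takes a much shorter and entirely different route than you do. Having located a packing cycle $C$ lying entirely inside $S$, the paper does not build a witness map at all: it simply picks a single vertex $v\in C$ and argues that $S\setminus\{v\}$ is still odd-cycle dominating. The point is that $v$ itself remains dominated (via $C\subseteq\langle S\rangle$), and the paper asserts that, $C$ being a vertex-disjoint odd cycle of the packing, $v$ is not essential for dominating any other element. That already contradicts the minimality of $S$. There is no parity/rerouting argument and no injectivity claim in the paper's proof.

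Your approach, by contrast, has a genuine gap at the final step. Suppose you do succeed in proving that each $w\in V\setminus S$ can serve as the private witness $w_c$ for at most one $c$ in any fixed cycle $C_i\subseteq S$. What that injectivity gives you is $|V\setminus S|\ge |V(C_i)|\ge 3$; it does \emph{not} give $|V\setminus S|\ge t=\tau_{odd}(G)$. Concretely, if $\tau_{odd}(G)=10$ and the one packing cycle that happens to lie inside $S$ is a triangle, your argument would only force $|V\setminus S|\ge 3$, which is perfectly compatible with $|V\setminus S|=9<t$. Your counting scheme ties the number of required external witnesses to the \emph{length} of a cycle inside $S$, not to the \emph{number} $t$ of packing cycles, so the intended contradiction with $|V\setminus S|<t$ never closes. (Separately, the rerouting step you already flag as ``delicate'' is genuinely incomplete: an odd closed walk through $w$ obtained by splicing need not contain an odd \emph{cycle} through $w$, since in the cycle decomposition of that walk the unique cycle containing $w$ may well be even.)
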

\begin{proof} 
To prove the lower bound, recall 
	\[\Gamma_{odd}(G) \geq \beta_{odd}(G) \geq 2 \left\lfloor \frac{n}{k} \right\rfloor.\]
For the upper bound, consider a minimal dominating set $S$. Suppose $|S| > n-\tau_{odd}(G)$. Then there is a disjoint odd cycle $C$ contained in $S$. Consider $\langle S \setminus \{v\} \rangle$ where $v \in C$. Note that the vertex is still dominated by the set $S \setminus \{v\}$, i.e., $v$ is contained in an odd cycle of the induced subgraph $S$. Also note that since $C$ is a vertex disjoint odd cycle, then $\{v\}$ is not essential for dominating another element. This contradicts the minimality of $S$. Thus, $\Gamma_{odd}(G) \leq n- \tau_{odd}(G)$. 

To show the bounds are tight, let $G$ be a 3-cycle. Then $\Gamma_{odd}(G) = 2 = 2 \left\lfloor\frac{n}{k} \right\rfloor = n - \tau_{odd}(G)$.
\end{proof}

Similarly, we obtain a bound for $\gamma_{odd}$.

\begin{proposition} \label{prop:LittleGammaEasyIneq}
Let $G$ be a graph and $|V(G)|=n$. Let $\kappa_{odd}(G)$ denote the number of vertices of $G$ not contained in any odd cycle of $G$, and let $\tau_{odd}(G)$ be the maximum number of vertex disjoint odd cycles of $G$. Then if G is not bipartite,
	\[ \mbox{girth}(G) -1 + \kappa_{odd}(G) \leq \gamma_{odd}(G) \leq n - \tau_{odd}(G);\] 
otherwise, $\gamma_{odd}(G) = n$.
\end{proposition}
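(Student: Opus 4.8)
The plan is to mirror almost verbatim the proof of Proposition \ref{prop:IndepEasyBound1}, since the lower-bound argument there used only the fact that the set in question is maximal odd-cycle independent, and by Lemma \ref{Lemma:IndepDom} every maximal odd-cycle independent set is a minimal odd-cycle dominating set — but the lower bound for $\gamma_{odd}$ must hold for \emph{every} minimal odd-cycle dominating set, not just those arising from maximal independent sets, so we argue directly. First, if $G$ is bipartite there are no odd cycles at all, so the empty set already fails to dominate unless $V\setminus S=\emptyset$; hence the only odd-cycle dominating set is $V$ itself, giving $\gamma_{odd}(G)=n$.

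For the main case, assume $G$ is not bipartite and set $g=\mbox{girth}(G)$. For the lower bound, let $S$ be any minimal (in particular, any) odd-cycle dominating set. Any vertex $v$ not contained in any odd cycle of $G$ must lie in $S$: otherwise $v\in V\setminus S$ would need to sit on an odd cycle of $\langle S\cup\{v\}\rangle$, which is a subgraph of $G$, contradicting that $v$ is on no odd cycle of $G$. So $S\supseteq$ (the $\kappa_{odd}(G)$ such vertices). Next, $S$ must contain at least one vertex that \emph{does} lie on an odd cycle of $G$ — in fact, if $S$ contained only the $\kappa_{odd}(G)$ acyclic-in-odd-sense vertices, then $\langle S\rangle$ has no odd cycle, so for $v\notin S$ the graph $\langle S\cup\{v\}\rangle$ would need an odd cycle through $v$; such a cycle has length $\ge g$, so it uses at least $g-1$ vertices of $S$ other than $v$, each of which lies on that odd cycle. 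Thus $|S|\ge \kappa_{odd}(G)+(g-1)$, exactly as in Proposition \ref{prop:IndepEasyBound1}. For the upper bound $\gamma_{odd}(G)\le n-\tau_{odd}(G)$: any minimal odd-cycle dominating set already satisfies $|S|\le\Gamma_{odd}(G)\le n-\tau_{odd}(G)$ by Proposition \ref{prop:GammaEasyIneq}, and $\gamma_{odd}(G)\le\Gamma_{odd}(G)$ is immediate from the definitions; alternatively one repeats the deletion argument from that proof directly. Tightness is witnessed by $G=C_3$, where $\gamma_{odd}(G)=2=\mbox{girth}(G)-1+\kappa_{odd}(G)=n-\tau_{odd}(G)$.

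The one point requiring a little care — and the place where the argument is slightly more delicate than in the independent case — is the step asserting that a minimal odd-cycle dominating set contains $g-1$ vertices lying on odd cycles of $G$. In the independent-set proof this followed from maximality (adding any vertex creates an odd cycle, which forces $\ge g-1$ already-present vertices on a short odd cycle). Here I instead use the \emph{domination} property on some fixed $v\notin S$: this is fine provided $V\setminus S\ne\emptyset$, i.e. $S\ne V$. If $S=V$ then trivially $|S|=n\ge \kappa_{odd}(G)+g-1$ because $n\ge g$. So the lower bound holds in all cases. I expect no real obstacle beyond bookkeeping; the proof is essentially a transcription of Proposition \ref{prop:IndepEasyBound1}'s proof with Lemma \ref{Lemma:IndepDom} and Proposition \ref{prop:GammaEasyIneq} supplying the bridge to the dominating-set setting.
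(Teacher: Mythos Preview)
Your proposal is correct and follows essentially the same route as the paper: both arguments observe that every vertex not on any odd cycle must lie in any odd-cycle dominating set, then use the girth to force at least $g-1$ further vertices into $S$, derive the upper bound via $\gamma_{odd}(G)\le\Gamma_{odd}(G)\le n-\tau_{odd}(G)$ from Proposition~\ref{prop:GammaEasyIneq}, and use $C_3$ for tightness. Your version of the lower-bound step (pick $v\notin S$ and count the $\ge g-1$ vertices of $S$ on the odd cycle through $v$ in $\langle S\cup\{v\}\rangle$) is in fact a bit cleaner than the paper's contradiction argument.

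One small bookkeeping slip: in the case $S=V$ you write ``$n\ge\kappa_{odd}(G)+g-1$ because $n\ge g$,'' but $n\ge g$ alone does not yield this when $\kappa_{odd}(G)>1$. The correct justification is that, since $G$ is not bipartite, it contains an odd cycle of length at least $g$, so at least $g$ vertices lie on some odd cycle; hence $n-\kappa_{odd}(G)\ge g>g-1$.
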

\begin{proof} 
The upper bound is trivial as $\gamma_{odd}(G) \leq \Gamma_{odd}(G) \leq n- \tau_{odd}(G)$. For the lower bound, consider an odd-cycle dominating set $S$, and let $\mbox{girth}(G)=g$. Since $S$ is an odd-cycle dominating set, all of the vertices not contained in an odd cycle of $G$ must be in $S$. We claim an additional $g-1$ vertices must be in $S$. Suppose not. Consider the smallest cycle $C$ and note that $C$ must have length of at least $g$. Since $S$ has $g-2$ or less additional vertices contained in an odd cycle, we can assume at least two of the vertices in $C$ are not in $S$. Suppose these vertices are dominated by $S$. Then each must be contained in a separate odd cycle where all other vertices are in $S$, but this contradicts the fact that $\mbox{girth}(G) =g$. Thus, $S$ has at least $g-1$ additional vertices giving us
	\[\gamma_{odd}(G) \geq \kappa_{odd}(G) + \mbox{girth}(G) -1.\]
To show that these bounds are tight, we consider the same example as we did in Proposition \ref{prop:GammaEasyIneq}. Consider a 3-cycle graph $G$. Then \[\gamma_{odd}(G) = 2 = \mbox{girth} -1 +\kappa_{odd}(G) = n - \tau_{odd}(G).\]

\end{proof}

\section{Cycle Irredundance and Odd-cycle Irredundance}
In this section we define the analogs of irredundance for both cycle domination and odd-cycle domination.

\begin{definition}
Let $G=(V,E)$ be a graph and $S\subseteq V$. Then $S$ is {\em cycle irredundant} if for all $u\in S$, there is a $v\in S^c\cup\{u\}$ such that
\begin{itemize}
\item $v=u$, if $u$ is not contained in a cycle of $\langle S\rangle$,
\item or, $v$ is not contained in a cycle of $\langle \left(S\setminus\{u\}\right)\cup\{v\}\rangle$, but is contained in a cycle of $\langle S\cup\{v\}\rangle$.
\end{itemize}
\end{definition}

That is, for every $u\in S$, $u$ is needed for $S$ to cycle dominate a vertex $v$.
Observe that if $u$ is not contained in a cycle of $\langle S\rangle$, then $v=u$ is an example of such a vertex.

Unlike independence and domination, cycle irredundance is not a hereditary or ancestral property.
Thus, checking for maximality with respect to cycle irredundance cannot be done by checking if there is a single vertex that can be added to the set.

Now, as before, we define a pair of parameters for cycle irredundance.

\begin{definition}
Let $G=(V,E)$ be a graph then:
\begin{itemize}
\item $IR_{cy}(G):=\max\{|S|\ :\ S \mbox{ is cycle irredundant}\}$ is the {\em upper cycle irredundance number} and,
\item $ir_{cy}(G):=\min\{|S|\ :\ S\mbox{ is maximal cycle irredundant}\}$ is the {\em lower cycle irredundance number}.
\end{itemize}
\end{definition}

As was the case for cycle independence and cycle domination, we are interested in exploring bounds on these parameters.
In particular, we are interested in finding a lower bound on $ir_{cy}(G)$.

\begin{proposition}\label{prop:CyIRBound}
Let $G$ be a graph, and let $\kappa(G)$ be the number of vertices that are not contained in any cycles of $G$. Then
\[\mbox{girth}(G)-1+\kappa(G)\leq ir_{cy}(G),\]
where $\mbox{girth}(G)=1$ if and only if $G$ is a forest.
\end{proposition}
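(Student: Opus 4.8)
The plan is to bound from below the size of an \emph{arbitrary} maximal cycle irredundant set $S$, just as in the proofs of Propositions~\ref{prop:EasyBounds} and~\ref{prop:CyDomBound}. Two elementary facts will carry most of the argument. First, if $v$ is a vertex of $G$ lying on no cycle of $G$, then for every vertex set $T$ with $v\notin T$ the induced subgraphs $\langle T\rangle$ and $\langle T\cup\{v\}\rangle$ have exactly the same cycles: a cycle using $v$ would be a cycle of $G$ through $v$, and a cycle avoiding $v$ already lives in $\langle T\rangle$. Second, every cycle independent set is cycle irredundant, since when $\langle S\rangle$ is acyclic no $u\in S$ lies on a cycle of $\langle S\rangle$, so the witness $v=u$ is always available. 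The convention $\mbox{girth}(G)=1$ disposes of the forest case at once: then $\kappa(G)=|V|$, every subset of $V$ is cycle independent and hence cycle irredundant, so $V$ is the unique maximal cycle irredundant set and $ir_{cy}(G)=|V|=\mbox{girth}(G)-1+\kappa(G)$.

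Now assume $G$ is not a forest, put $g=\mbox{girth}(G)\ge 3$, let $K$ be the set of the $\kappa(G)$ vertices lying on no cycle of $G$, and let $S$ be any maximal cycle irredundant set. The first step is to prove $K\subseteq S$. If some $v\in K$ is missing, I would check directly that $S\cup\{v\}$ is still cycle irredundant, contradicting maximality: for $u=v$ the witness is $v$ itself; for $u\in S$, I transport the witness $v'$ that worked for $S$, using the first fact to see that inserting $v\in K$ changes neither which cycles pass through $u$ nor which pass through $v'$ in each of the relevant induced subgraphs (and noting that $v'$, lying on a cycle of $\langle S\cup\{v'\}\rangle$, cannot equal $v$). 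The second step is to prove $|S\setminus K|\ge g-1$. If not, then $\langle S\rangle$ has no cycle, since any cycle of $\langle S\rangle$ is a cycle of $G$ and so uses only vertices of $S\setminus K$, of which there are fewer than $g$; thus $S$ is cycle independent. Because $G$ has a cycle we have $|V\setminus K|\ge g>|S\setminus K|$, so there is a vertex $w\notin S$ lying on a cycle of $G$; then $\langle S\cup\{w\}\rangle$ still has fewer than $g$ vertices outside $K$, hence is acyclic, hence $S\cup\{w\}$ is cycle irredundant by the second fact, again contradicting maximality of $S$. Combining the two steps, $|S|=|K|+|S\setminus K|\ge\kappa(G)+g-1$, and as $S$ was arbitrary this yields $ir_{cy}(G)\ge\mbox{girth}(G)-1+\kappa(G)$.

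The one delicate point is the first step, proving $K\subseteq S$: since cycle irredundance is neither hereditary nor ancestral, there is no monotonicity shortcut, and one must genuinely re-verify the witness condition for every $u\in S\cup\{v\}$ after inserting $v$. The work is the bookkeeping around the first fact --- one needs that for the second-branch witness $v'$ of a vertex $u$, both the statement ``$v'$ lies on no cycle of $\langle(S\setminus\{u\})\cup\{v'\}\rangle$'' and the statement ``$v'$ lies on a cycle of $\langle S\cup\{v'\}\rangle$'' survive the insertion of $v$; the former because $v\in K$ is absent from the relevant vertex set, the latter because a cycle persists when one passes to a larger induced subgraph. Everything past that is routine counting against the girth, and I anticipate no trouble in the forest case or in the second step.
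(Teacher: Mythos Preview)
Your proposal is correct and follows essentially the same route as the paper: both argue that adjoining a vertex lying on no cycle of $G$ preserves cycle irredundance (forcing $K\subseteq S$), and then use the girth to bound the remaining vertices. The paper compresses this into a terse reduction to the case $\kappa(G)=0$ and then observes that any set of size at most $g-1$ induces a forest and is therefore cycle irredundant; your version is more careful, in particular in verifying that the witnesses for the old vertices $u\in S$ survive the insertion of $v\in K$ --- a point the paper's one-line justification actually glosses over.
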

\begin{proof}
Observe that if a vertex $v$ is not contained in a cycle of $G$ then for any cycle irredundant set $S$, $S\cup\{v\}$ is still cycle irredundant, as $v$ cycle dominates itself, and is not in any cycles of $\langle S\cup\{v\}\rangle$.

Thus, we may assume that every vertex of $G$ is contained on a cycle, and hence $\kappa(G)=0$.
Let $\mbox{girth}(G)=g\geq 3$.
Observe that for any set $S$, such that $|S|\leq g-1$, $\langle S\rangle$ is a forest.
Hence, by the definition of cycle irredundance, $S$ is cycle irredundant.
Thus, $|S|=g-1\leq ir_{cy}(G)$.
\end{proof}

\begin{corollary}
Let $G$ be a graph such that every vertex is contained on a cycle, and let $\mbox{girth}(G)=g$.
Then, $ir_{cy}(G)=g-1$ if and only if $G$ contains a path of length $g-1$ such that every vertex not on the path is adjacent to the two leaves and no other vertex of the path.
\end{corollary}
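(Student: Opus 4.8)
The plan is to follow the template of Corollary~\ref{cor:IndepBound}, using Proposition~\ref{prop:CyIRBound} for the lower bound: since every vertex lies on a cycle, $\kappa(G)=0$ and so $ir_{cy}(G)\ge g-1$, and it remains only to characterize equality.

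For the ``only if'' direction I would take a maximal cycle irredundant set $S$ with $|S|=ir_{cy}(G)=g-1$; note $S\ne V$ because the presence of a $g$-cycle forces $|V|\ge g$. Since $|S|<g=\mbox{girth}(G)$, the induced subgraph $\langle S\rangle$ is a forest. Now fix $v\notin S$. Any set whose induced subgraph is a forest is cycle irredundant (each of its vertices is its own witness, exactly as in the proof of Proposition~\ref{prop:CyIRBound}), so maximality of $S$ forces $\langle S\cup\{v\}\rangle$ to contain a cycle; as $|S\cup\{v\}|=g$, that cycle must use all $g$ vertices. Deleting $v$ from this spanning $g$-cycle leaves a spanning path of $\langle S\rangle$, and a forest on $g-1$ vertices that carries a spanning path equals that path, so $\langle S\rangle$ is an induced path $p_1p_2\cdots p_{g-1}$. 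From the $g$-cycle, $v$ is adjacent to the leaves $p_1$ and $p_{g-1}$, and $v$ is adjacent to no interior $p_i$, since otherwise $p_1p_2\cdots p_i v p_1$ would be a cycle of length at most $g-1$. As $\langle S\rangle$ is the same path for every choice of $v\notin S$, this is the path required by the statement.

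For the ``if'' direction, given such a path $P$ on $g-1$ vertices, set $S=V(P)$. Then $|S|=g-1<g$, so $\langle S\rangle$ is a forest and $S$ is cycle irredundant, and it remains to show $S$ is \emph{maximal}. Let $T\supset S$ and pick $u\in T\setminus S$. In $\langle T\rangle$ the vertex $u$ lies on the $g$-cycle through $P$ and $u$, so the choice $v=u$ does not witness the cycle irredundance of $T$ at $u$; and for any $w\in V\setminus T$ the set $(T\setminus\{u\})\cup\{w\}$ still contains $V(P)\cup\{w\}$, which carries the $g$-cycle obtained by joining $w$ to the two leaves of $P$, so $w$ already lies on a cycle of $\langle(T\setminus\{u\})\cup\{w\}\rangle$ and also does not witness $u$. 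Hence $u$ has no witness, $T$ is not cycle irredundant, and $S$ is a maximal cycle irredundant set of size $g-1$; together with Proposition~\ref{prop:CyIRBound} this gives $ir_{cy}(G)=g-1$. The same argument applies verbatim when $g=3$, where $P$ is a single edge and $G$ is a generalized double star as in Figure~\ref{fig:DoubleStar}, an edge together with any off-path vertex being a triangle.

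The step I expect to be the main obstacle is this maximality claim in the ``if'' direction. Because cycle irredundance is neither hereditary nor ancestral, it is not enough to forbid adding a single vertex to $S$: one must rule out every superset $T\supset S$ simultaneously. The argument above sidesteps the difficulty by exploiting that $P$ together with any single off-path vertex is already a shortest cycle, which at once kills every candidate witness for the off-path vertices of $T$.
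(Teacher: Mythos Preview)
Your proof is correct and follows essentially the same route as the paper's: use Proposition~\ref{prop:CyIRBound} for the lower bound, argue that maximality of $S$ with $|S|=g-1$ forces $\langle S\cup\{v\}\rangle$ to contain a (necessarily spanning) cycle for every $v\notin S$, and for the converse verify directly that $V(P)$ is maximal cycle irredundant. Your execution is in fact tidier than the paper's: you invoke the one-line observation ``forest $\Rightarrow$ cycle irredundant'' to dispose of the case $\langle S\cup\{v\}\rangle$ acyclic immediately, where the paper runs a separate contradiction argument, and your maximality check for arbitrary $T\supset S$ is explicit where the paper appeals to the cycle-dominating property and then adds a redundant verification that no smaller set is maximal.
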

\begin{proof}
Suppose $S$ is a maximal cycle irredundant set such that $|S|=g-1$.
Since $S$ is maximal cycle irredundant, every vertex $v\notin S$ is adjacent to at least one vertex in $S$.
Since $\langle S\rangle$ is a forest, for every $v\notin S$, adding $v$ to $S$ creates a set that is not cycle irredudant.
There are two cases, either adding $v$ to $S$ creates a cycle, or it does not.
If adding $v$ to $S$ does not create a cycle, then there is a vertex $u\notin S$, $u\neq v$, such that $\langle S\cup\{u,v\}\rangle$ has two cycles containing $u$: one containing $v$ and one not containing $v$.
As $\langle S\cup\{v\}\rangle$ is a forest, $u$ is adjacent to vertices in at least two components of $\langle S\rangle$ that are not separate components in $\langle S\cup\{v\}\rangle$.
Hence, the cycle that contains $u$ in $\langle S\cup\{u\}\rangle$ cannot contain all the vertices of $S$.
That is, the cycle has length less than $g$, which contradicts the definition of $g=\mbox{girth}(G)$.

Now suppose $\langle S\cup\{v\}\rangle$ is not a forest for all vertices $v\notin S$. Then by definition of $g$, $\langle S\cup\{v\}\rangle$ contains a cycle.
That is, there is a path of length $g-1$ so that every vertex not on the path is adjacent to exactly the two leaves of the path.

Consider such a path.
Clearly, the vertices of the path give a maximal cycle irredundant set, as they give a cycle dominating set.
It suffices to show that there are no smaller sets that are still maximal cycle irredundant.
Suppose $S'$ is an irredundant set with $|S'|<g-1$, we show that $S'$ is not maximal.
Clearly, if there is a vertex $v$ that is not adjacent to at least two vertices of $S'$, then $S'\cup\{v\}$ is still irredundant.
Suppose $v$ is not a vertex of $S'$ and $\langle S'\cup\{v\}\rangle$ is a forest, i.e., $v$ connects at least two components of $\langle S'\rangle$.
Then, there is a vertex $u\notin S'\cup\{v\}$ that has at least two cycles in $\langle S'\cup\{u,v\}\rangle$ containing $u$: one containing $v$ and one not containing $v$.
The cycle not containing $v$ has length less than $g-1$, which contradicts the definition of $g$.
Hence, for all $v$ not in $S'$, $\langle S'\cup\{v\}\rangle$ contains a cycle which contradicts the definition of $g$.
Thus, $S$ is the smallest maximal cycle irredundant set.
\end{proof}

As before, this corollary gives a characterization of when $ir_{cy}(G)=g-1$; namely, the same as the characterization of when $\gamma_{cy}(G)=g-1$.

The full analog of the domination inequality chain for cycle domination can now be stated.

\begin{theorem}
Let $G$ be a graph. Then
\[ir_{cy}(G)\leq \gamma_{cy}(G)\leq i_{cy}(G)\leq \beta_{cy}(G)\leq \Gamma_{cy}(G)\leq IR_{cy}(G).\]
\end{theorem}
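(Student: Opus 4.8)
The plan is to prove the two new endpoint inequalities $ir_{cy}(G)\le\gamma_{cy}(G)$ and $\Gamma_{cy}(G)\le IR_{cy}(G)$ and then to quote Lemma~\ref{lem:CyIneqChain2} for the middle chain $\gamma_{cy}(G)\le i_{cy}(G)\le\beta_{cy}(G)\le\Gamma_{cy}(G)$. Both endpoints will fall out of a single structural statement, in direct analogy with the classical derivation of $ir(G)\le\gamma(G)$ and $\Gamma(G)\le IR(G)$: \emph{every minimal cycle dominating set is a maximal cycle irredundant set.} Granting this, $\Gamma_{cy}(G)\le IR_{cy}(G)$ is immediate, since a largest minimal cycle dominating set is then a cycle irredundant set of the same size; and $ir_{cy}(G)\le\gamma_{cy}(G)$ follows because a smallest cycle dominating set is minimal, hence a maximal cycle irredundant set of size $\gamma_{cy}(G)$.

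To show a minimal cycle dominating set $S$ is cycle irredundant, I would fix $u\in S$ and use minimality: $S\setminus\{u\}$ is not cycle dominating, so some $w\in V\setminus(S\setminus\{u\})=S^{c}\cup\{u\}$ is not cycle dominated by it. If $w=u$, then $\langle S\rangle=\langle(S\setminus\{u\})\cup\{u\}\rangle$ has no cycle through $u$, and the first bullet of the definition of cycle irredundance is met with witness $u$. If $w\ne u$, then $w\in S^{c}$ lies in no cycle of $\langle(S\setminus\{u\})\cup\{w\}\rangle$, but since $S$ is cycle dominating it lies in a cycle of $\langle S\cup\{w\}\rangle$, so the second bullet is met with witness $w$. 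Hence $S$ is cycle irredundant.

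The more delicate half is that such an $S$ is \emph{maximal} cycle irredundant. Because cycle irredundance is neither hereditary nor ancestral, maximality cannot be tested one vertex at a time, so I would argue directly that no proper superset $T\supsetneq S$ is cycle irredundant. Fix any $v\in T\setminus S$. As $v\in T$, cycle irredundance of $T$ provides a witness $v'\in T^{c}\cup\{v\}$. If $v'=v$, the definition forces $v$ to lie in no cycle of $\langle T\rangle$; but $v\notin S$ and $S$ cycle dominating give a cycle through $v$ inside $\langle S\cup\{v\}\rangle\subseteq\langle T\rangle$, a contradiction. If $v'\in T^{c}$, the definition forces $v'$ to lie in no cycle of $\langle(T\setminus\{v\})\cup\{v'\}\rangle$; but $S\subseteq T\setminus\{v\}$ and $v'\notin S$ with $S$ cycle dominating give a cycle through $v'$ inside $\langle S\cup\{v'\}\rangle\subseteq\langle(T\setminus\{v\})\cup\{v'\}\rangle$, again a contradiction. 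Therefore $S$ admits no cycle irredundant proper superset, so it is maximal cycle irredundant.

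I expect the only real obstacle to be careful bookkeeping in this last step: one must track which of the three induced subgraphs appearing in the definition of cycle irredundance each witness is compared against, and apply the inclusions $S\cup\{v\}\subseteq T$ and $S\subseteq T\setminus\{v\}$ in exactly the right places. The degenerate case $\mathrm{girth}(G)=1$ (when $G$ is a forest) is trivial, as then $V$ is the unique cycle dominating set and every vertex subset is cycle irredundant, so the chain collapses to $|V|\le\cdots\le|V|$. With the structural statement in hand, the full chain is assembled exactly as described in the first paragraph, using Lemma~\ref{lem:CyIneqChain2} for the three interior inequalities.
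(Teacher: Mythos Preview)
Your proposal is correct and follows essentially the same route as the paper: quote Lemma~\ref{lem:CyIneqChain2} for the three interior inequalities and derive both endpoints from the single structural claim that every minimal cycle dominating set is maximal cycle irredundant. Your maximality argument---picking $v\in T\setminus S$ and eliminating both possible witnesses via the inclusions $S\cup\{v\}\subseteq T$ and $S\subseteq T\setminus\{v\}$---is in fact a cleaner and more complete execution of the step the paper sketches.
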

\begin{proof}
By Lemma \ref{lem:CyIneqChain2} we know that $\gamma_{cy}(G)\leq i_{cy}(G)\leq \beta_{cy}(G)\leq \Gamma_{cy}(G)$.
Thus, it suffices to show that $ir_{cy}(G)\leq \gamma_{cy}(G)$ and $\Gamma_{cy}(G)\leq IR_{cy}(G)$.
To prove this, we show that any minimal cycle dominating set is in fact maximal cycle irredundant.

Suppose $S$ is a minimal cycle dominating set, and let $v\in S$; we show that $S$ is cycle irredundant.
There are two cases: either $v$ is contained in a cycle of $\langle S\rangle$, or it is not contained in such a cycle.
If $v$ is not contained in a cycle of $\langle S\rangle$, then the vertex $v$ is needed to cycle dominate itself.
If $v$ is contained in such a cycle, then by the definition of minimality, there is a vertex $u\notin S$ such that every cycle of $\langle S\cup\{u\}\rangle$ containing $u$ also contains $v$.
Hence $S$ is a cycle irredundant set.

To show maximality, suppose there is a set $S'$ such that $S\subseteq S'$ and $S'$ is maximal irredundant.
Since $S'$ contains $S$, there is a vertex $u\notin S'$ such that $u$ is contained in a cycle of $\langle S'\cup\{u\}\rangle$ but not in $\langle S\cup\{u\}\rangle$. This contradicts the definition of cycle domination.
Hence $S$ is maximal cycle irredundant.
\end{proof}

We now look at the relaxation of odd-cycle domination, odd-cycle irredundance.

\begin{definition} A set $S$ is {\em odd-cycle irredundant} if for every $v \in S$ there exists $u \in V \setminus (S-v)$ such that $S$ dominates $u$, but $S \setminus \{v\}$ does not dominate $u$. \end{definition}

Given the nature of odd-cycle irredundant sets, we are interested in the maximal odd-cycle irredundant sets. Consequently, we define the upper and lower odd-cycle irredundance numbers.

 \begin{definition} Let $G = (V,E)$ be a graph. Then
 \begin{itemize}
 	\item  $ IR_{odd} (G) = \max\{|S|: \text{S is odd-cycle irredundant}\} $ is the {\em upper odd-cycle irredundance number} and,
 	\item $ir_{odd} (G) = \min\{|S|: \text{S is maximal odd-cycle irredundant}\}$ is the {\em lower odd-cycle irredundance number}.
\end{itemize}
 \end{definition}
 
As was the case for the lower cycle irredundant number, we are able to obtain a tight bound for the lower odd-cycle irredundant number.

\begin{proposition}
Let $G$ be a graph, and let $\kappa_{odd}(G)$ be the number of vertices that are not contained in any odd cycles of $G$. Then, if $G$ is bipartite,
\[\mbox{girth}(G)-1+\kappa_{odd}(G)\leq ir_{odd}(G);\]
otherwise, $ir_{odd}(G) = n$.
\end{proposition}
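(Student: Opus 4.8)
The plan is to follow the template of Proposition \ref{prop:CyIRBound}, the cycle-irredundance analog, and split into the two regimes named in the statement. The unifying tool throughout would be the self-domination option $u=v$ in the definition of odd-cycle irredundance: just as a vertex $v\in S$ lying on no cycle of $\langle S\rangle$ automatically witnesses its own irredundancy for cycle irredundance, a vertex $v\in S$ lying on no odd cycle of $\langle S\rangle$ should serve as its own private witness here. Establishing this equivalence first would reduce the whole argument to tracking when odd cycles are created inside induced subgraphs, and it lets me reuse the $\kappa_{odd}$-padding argument from Proposition \ref{prop:IndepEasyBound1} and the girth argument from Proposition \ref{prop:CyIRBound} essentially verbatim.

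For the bipartite case I would first show that any vertex lying on no odd cycle of $G$ may be adjoined to an odd-cycle irredundant set without destroying irredundancy, since such a vertex self-dominates and, lying on no odd cycle, creates no new odd cycle in the induced subgraph. It would then follow that every maximal odd-cycle irredundant set already contains all $\kappa_{odd}(G)$ of these vertices. Restricting attention to the vertices that do lie on odd cycles, I would argue that any induced subgraph spanned by fewer than $\mbox{girth}(G)-1$ of them contains no odd cycle and is therefore irredundant by self-domination, so no maximal set can omit that many core vertices; adjoining the $\kappa_{odd}(G)$ forced vertices would yield $\mbox{girth}(G)-1+\kappa_{odd}(G)\le ir_{odd}(G)$.

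For the remaining case I would argue that when $G$ is not bipartite the forcing becomes total. The strategy is to take an arbitrary odd-cycle irredundant set $S\subsetneq V$ and exhibit a vertex outside $S$ whose addition preserves the private-witness condition for every element, so that no proper subset of $V$ can be maximal and the minimum-size maximal odd-cycle irredundant set is all of $V$, giving $ir_{odd}(G)=n$. This would mirror the way bipartiteness forces $i_{odd}(G)=n$ in Proposition \ref{prop:IndepEasyBound1}, but run in the opposite regime.

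The hard part will be the extension step of the second case. Because odd-cycle irredundance is neither hereditary nor ancestral, maximality cannot be certified by a single-vertex test, so controlling when adjoining one vertex simultaneously keeps a private witness for \emph{every} element of $S$ is delicate; I expect this non-monotonicity, together with pinning down the precise interaction between the girth bound and the odd-cycle-free set counted by $\kappa_{odd}(G)$, to be the central obstacle. The bipartite case, by contrast, should reduce routinely to the self-domination observation once the padding argument is in place.
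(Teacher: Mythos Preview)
Your plan is derailed by a typo in the printed statement: the two cases are swapped. Compare with Propositions \ref{prop:IndepEasyBound1} and \ref{prop:LittleGammaEasyIneq}, where the inequality is asserted when $G$ is \emph{not} bipartite and the value $n$ is asserted when $G$ \emph{is} bipartite; the same dichotomy is intended here, and the paper's own proof is written for that reading (it assumes there exist vertices lying on odd cycles). Reading the statement literally, as you did, makes both halves false. In the bipartite regime $\kappa_{odd}(G)=n$, so the printed inequality would assert $\mbox{girth}(G)-1+n\le ir_{odd}(G)\le n$, impossible once $G$ contains any cycle. In the non-bipartite regime the claim $ir_{odd}(G)=n$ fails already for $G=C_3$: the full vertex set is \emph{not} odd-cycle irredundant (removing any one vertex still odd-cycle dominates everything), while any two vertices form a maximal odd-cycle irredundant set, so $ir_{odd}(C_3)=2$. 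Hence the ``hard part'' you identified---extending any proper $S\subsetneq V$ while preserving all private witnesses in a non-bipartite graph---is not merely hard but impossible, and no amount of care with the non-monotonicity of irredundance will rescue it.

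Once the cases are put the right way round, your outline for what you called the ``bipartite case'' is exactly the paper's argument for the (correct) non-bipartite case: first the $\kappa_{odd}$-padding step, showing that any vertex on no odd cycle can be adjoined to an odd-cycle irredundant set via self-domination, so every maximal such set already contains all $\kappa_{odd}(G)$ of them; then the girth step, showing by contradiction that if a maximal $S$ contained at most $g-2$ vertices lying on odd cycles, one could adjoin another such vertex $v$ without creating an odd cycle in $\langle S\cup\{v\}\rangle$ (too few vertices to close a cycle of length $\ge g$), so $v$ self-dominates and $S$ was not maximal. The genuinely bipartite case is then the trivial one: every vertex lies on no odd cycle, the padding step alone forces $S=V$, and $ir_{odd}(G)=n$.
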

\begin{proof}
Observe that for any vertex $v$ not contained in an odd cycle of $G$, then for any odd-cycle irredundant set $S$, $S \cup \{v\}$ is still odd-cycle irredundant. Hence, $|S| \geq \kappa_{odd}(G)$. Let $\mbox{girth}(G) = g$.
We want to show for a maximal odd-cycle irredundant set $S$, there is an additional $g-1$ vertices that are contained in odd cycles of $G$. Suppose not. Then $S$ has only $g-2$ or less vertices contained in odd cycles of $G$. Note that each of these vertices are only dominated by itself as the $\mbox{girth}(G) = g$. Consider a vertex $v \notin S$ contained in one of these odd cycles. Note that $S$ does not dominate $v$ and that $\langle	S \cup \{v\} \rangle$ does not contain an odd cycle, as $\mbox{girth} = g$ and there are only $g-1$ vertices in $\langle S \cup \{v\} \rangle$. Hence, the only element in $\langle S \cup \{v\} \rangle$ to dominate $\{v\}$ is itself. As a result, $S \cup\{v\}$ is odd-cycle irredundant, which contradicts the maximality of $S$. Thus,
	\[|S| \geq \kappa_{odd} + g - 1,\]
giving us our desired result. 

To show the bound is tight, consider a $C_3$ graph $G$. Then $ir_{odd}(G) = 3 = g-1 + \kappa_{odd}(G)$.
\end{proof}

In the case of $C_3$ graphs, the minimal dominating sets are maximal irredundant sets, and vice versa. This observation can be generalized as follows: 

\begin{lemma}  \label{lemma:DomIrr}
For a graph $G=(V,E)$, if $S \subset V(G)$ is a minimal odd-cycle dominating set, then $S$ is a maximal irredundant set.
\end{lemma}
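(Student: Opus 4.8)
The plan is to run, with ``cycle'' replaced everywhere by ``odd cycle,'' the same argument that establishes the cycle irredundance portion of the chain above — namely that a minimal cycle dominating set is maximal cycle irredundant. Two things must be checked: that $S$ is itself odd-cycle irredundant, and that no odd-cycle irredundant set properly contains $S$.

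For the first, fix $v\in S$. Since $S$ is a \emph{minimal} odd-cycle dominating set, $S\setminus\{v\}$ is not odd-cycle dominating, so there is a vertex $u\in V\setminus(S\setminus\{v\})$ that $S\setminus\{v\}$ fails to dominate. If $u\neq v$, then $u\in V\setminus S$, so $S$ does dominate $u$ (being odd-cycle dominating), and $u$ is precisely the private witness that the definition of odd-cycle irredundance demands for $v$. If $u=v$, then $S\setminus\{v\}$ fails to dominate $v$; I would observe that this forces $v$ to lie on no odd cycle of $\langle S\rangle$ — for if $v$ lay on an odd cycle $C\subseteq\langle S\rangle$, that cycle contains a second vertex $z\neq v$ with $z\in S\setminus\{v\}$, and $v,z$ lie together on the odd cycle $C$ of $\langle S\rangle=\langle(S\setminus\{v\})\cup\{v\}\rangle$, so $S\setminus\{v\}$ would dominate $v$, a contradiction — and then $v$ serves as its own witness, exactly as in the self-witness clause of the cycle-irredundance definition. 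Either way every $v\in S$ has a witness, so $S$ is odd-cycle irredundant.

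For maximality, suppose toward a contradiction that $S'\supsetneq S$ is odd-cycle irredundant, and pick $w\in S'\setminus S$. Applying odd-cycle irredundance of $S'$ at $w$ yields $u\in V\setminus(S'\setminus\{w\})$ with $S'$ dominating $u$ but $S'\setminus\{w\}$ not dominating $u$. Since $w\notin S$, in either case ($u=w$ or $u\notin S'$) we have $u\notin S$, so by odd-cycle domination of $S$ some $z\in S$ lies with $u$ on an odd cycle $C\subseteq\langle S\cup\{u\}\rangle$. Because $S\subseteq S'\setminus\{w\}$, the same cycle $C$ sits inside $\langle(S'\setminus\{w\})\cup\{u\}\rangle$ with $z\in S'\setminus\{w\}$, so $S'\setminus\{w\}$ does dominate $u$ — a contradiction. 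Hence no proper superset of $S$ is odd-cycle irredundant, and $S$ is maximal.

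The combinatorial content here is light; the only delicate point is the degenerate case $u=v$ (equivalently $u=w$), where one must pin down what ``$S$ dominates $u$'' means when $u$ would belong to $S$ and invoke the same self-witness convention already implicit in the cycle-irredundance definition. I expect keeping that bookkeeping consistent — rather than any genuine graph-theoretic difficulty — to be the main thing to get right, and I would also note that this lemma, together with Lemma~\ref{Lemma:IndepDom} and Proposition~\ref{prop:GammaIIneq}, supplies the step needed to assemble the full odd-cycle analog of the domination inequality chain.
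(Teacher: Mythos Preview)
Your argument is correct and follows the same two-step template as the paper: use minimality of $S$ to produce a private witness for each $v\in S$, then use the fact that $S$ already dominates every vertex outside $S$ to show that no element added to $S$ can have a private witness. The paper's proof is terser and only checks that $S\cup\{v\}$ fails to be irredundant for each single $v\notin S$; you instead show that \emph{every} proper superset $S'\supsetneq S$ fails, which is the more careful formulation given the paper's own remark that irredundance is neither hereditary nor ancestral (so $1$-maximality does not automatically imply maximality). Your explicit treatment of the degenerate case $u=v$ and the self-witness convention is also more thorough than the paper's one-line ``dominates at least one less vertex'' justification. These are refinements in rigor rather than a different method.
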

\begin{proof}
Consider a minimal odd-cycle dominating set $S$. Then for any $s \in S$, the induced graph $\langle S \setminus s \rangle$ dominates at least one less vertex than the induced subgraph $\langle S \rangle$. Thus, $S$ is odd-cycle irredundant. Since $S$ is minimal odd-cycle dominating, we also have that for any $v \in V \setminus S$, the vertex $v$ is dominated by $\langle S \rangle$. As a result, $S \cup \{v\}$ is not odd-cycle irredundant, implying $S$ is a maximal odd-cycle irredundant set.  
\end{proof}

Using the containment given in Lemma \ref{lemma:DomIrr}, we obtain the next proposition.

\begin{proposition} \label{prop:gamma_ir_odd}
For a graph $G$,
	\[\Gamma_{odd} (G) \leq IR_{odd} (G) \quad  \text{and}  \quad ir_{odd} (G) \leq \gamma_{odd} (G).\]
\end{proposition}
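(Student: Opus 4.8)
The plan is to obtain this proposition as an immediate corollary of Lemma \ref{lemma:DomIrr}, exactly paralleling the way Proposition \ref{prop:GammaIIneq} was deduced from Lemma \ref{Lemma:IndepDom}. The one structural fact we need is the set-family containment supplied by that lemma: every minimal odd-cycle dominating set is a maximal odd-cycle irredundant set. Once this containment is in hand, each of the two inequalities drops out by comparing the relevant extremum of $|S|$ taken over the larger family with the extremum taken over the smaller family.

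For the inequality $\Gamma_{odd}(G) \leq IR_{odd}(G)$, I would take a minimal odd-cycle dominating set $S$ realizing $|S| = \Gamma_{odd}(G)$. By Lemma \ref{lemma:DomIrr}, $S$ is a maximal odd-cycle irredundant set, and in particular it is an odd-cycle irredundant set, so $|S|$ is one of the values over which the maximum defining $IR_{odd}(G)$ is taken; hence $\Gamma_{odd}(G) = |S| \leq IR_{odd}(G)$.

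For the inequality $ir_{odd}(G) \leq \gamma_{odd}(G)$, I would take an odd-cycle dominating set $S$ with $|S| = \gamma_{odd}(G)$. Such a set is automatically minimal: any proper subset has strictly smaller cardinality and therefore cannot be odd-cycle dominating, since $S$ already has minimum possible size. By Lemma \ref{lemma:DomIrr}, $S$ is then a maximal odd-cycle irredundant set, so it is a legitimate competitor in the minimum defining $ir_{odd}$, giving $ir_{odd}(G) \leq |S| = \gamma_{odd}(G)$.

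I do not expect any real obstacle here; the content lives entirely in Lemma \ref{lemma:DomIrr}. The only point that warrants a moment of care is the bookkeeping with the definitions, since $\gamma_{odd}$ and $ir_{odd}$ are minima over the \emph{minimal} odd-cycle dominating sets and the \emph{maximal} odd-cycle irredundant sets respectively: one must confirm (as above) that a minimum-size odd-cycle dominating set is indeed minimal before invoking the lemma, and likewise that a maximum-size minimal odd-cycle dominating set is a valid candidate for the maximum defining $IR_{odd}$. Both checks are routine.
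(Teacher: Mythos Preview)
Your proposal is correct and matches the paper's approach exactly: the paper's proof is the single line ``Follows directly from Lemma \ref{lemma:DomIrr},'' and your argument simply unpacks that sentence. The extra care you take in verifying that a minimum-size odd-cycle dominating set is minimal is appropriate bookkeeping but introduces no new ideas beyond what the paper intends.
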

\begin{proof}
Follows directly from Lemma \ref{lemma:DomIrr}.
\end{proof}

Thus, from the preceding propositions, we obtain for odd cycles an analog of the domination chain.

\begin{theorem}
Let $G$ be a graph. Then
	\[ir_{odd}(G) \leq \gamma_{odd}(G) \leq i_{odd}(G) \leq \beta_{odd}(G) \leq \Gamma_{odd}(G) \leq	IR_{odd}(G).\]
\end{theorem}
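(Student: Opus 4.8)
The plan is to assemble the theorem entirely from inequalities already established earlier in the excerpt, so the proof will be short and structural rather than computational. The chain has five links; I would treat them left to right.

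First I would handle the two outermost links, $ir_{odd}(G) \leq \gamma_{odd}(G)$ and $\Gamma_{odd}(G) \leq IR_{odd}(G)$, both of which are exactly the content of Proposition \ref{prop:gamma_ir_odd}. That proposition in turn rests on Lemma \ref{lemma:DomIrr}, which says every minimal odd-cycle dominating set is a maximal odd-cycle irredundant set; the containment of the family of minimal odd-cycle dominating sets inside the family of maximal odd-cycle irredundant sets forces the min of the former to dominate the min of the latter (giving $ir_{odd} \leq \gamma_{odd}$) and the max of the latter to dominate the max of the former (giving $\Gamma_{odd} \leq IR_{odd}$). Next I would handle the second and fourth links, $\gamma_{odd}(G) \leq i_{odd}(G)$ and $\beta_{odd}(G) \leq \Gamma_{odd}(G)$, which are precisely Proposition \ref{prop:GammaIIneq}, itself a consequence of Lemma \ref{Lemma:IndepDom} (every maximal odd-cycle independent set is a minimal odd-cycle dominating set) by the same min/max-of-a-subfamily reasoning.

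The only remaining link is the middle one, $i_{odd}(G) \leq \beta_{odd}(G)$, and I would note this is immediate from the definitions: a maximal odd-cycle independent set is in particular odd-cycle independent, so the minimum size of a maximal odd-cycle independent set is at most the maximum size of an odd-cycle independent set. Concatenating the five links then yields
\[
ir_{odd}(G) \leq \gamma_{odd}(G) \leq i_{odd}(G) \leq \beta_{odd}(G) \leq \Gamma_{odd}(G) \leq IR_{odd}(G),
\]
as claimed.

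There is no real obstacle here; the work was done in proving Lemma \ref{Lemma:IndepDom}, Lemma \ref{lemma:DomIrr}, and the attendant propositions. If anything, the subtle point worth double-checking is that the two lemmas genuinely produce \emph{strict} family containments in the right direction (maximal odd-cycle independent $\subseteq$ minimal odd-cycle dominating $\subseteq$ maximal odd-cycle irredundant), since it is this nesting — and not any relation among individual sets of extremal size — that legitimizes comparing the parameters; but that is exactly what those lemmas assert, so the theorem follows.
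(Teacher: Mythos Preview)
Your proposal is correct and matches the paper's own proof essentially verbatim: the paper simply says the chain follows from the definitions of $i_{odd}$ and $\beta_{odd}$ together with Propositions \ref{prop:GammaIIneq} and \ref{prop:gamma_ir_odd}, which is exactly the assembly you describe. Your additional unpacking of the min/max-of-a-subfamily logic behind those propositions is sound and, if anything, more explicit than the paper.
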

\begin{proof}
Follows directly from the definition of $\beta_{odd}$ and $i_{odd}$  and the propositions \ref{prop:GammaIIneq} and \ref{prop:gamma_ir_odd}.
\end{proof}

\section{Acknowledgments}

The authors would like to thank Steve Hedetniemi for his help with this problem.

\bibliography{CycleDominationBib}{}
\bibliographystyle{plain}

\end{document}